\documentclass[a4paper,11pt,reqno]{amsart}

\usepackage[utf8]{inputenc}
\usepackage[T1]{fontenc}
\usepackage[english]{babel}
\usepackage{lmodern}
\usepackage{amsmath,amssymb,amsthm,amscd}
\usepackage{a4wide,mathrsfs,graphicx,enumerate,textcomp,xspace,calc}
\usepackage{xcolor}

\definecolor{darkgreen}{rgb}{0,0.5,0}
\definecolor{darkred}{rgb}{0.7,0,0}
\usepackage[colorlinks, 
citecolor=darkgreen, linkcolor=darkred]{hyperref}

\newtheorem{defn}{Definition}[section]
\newtheorem{lemma}[defn]{Lemma}
\newtheorem{prop}[defn]{Proposition}
\newtheorem{thm}[defn]{Theorem}
\newtheorem{cor}[defn]{Corollary}

\newtheorem{rk}[defn]{Remark}

\newtheorem{mtheorem}{Theorem}

\newtheorem{mcor}[mtheorem]{Corollary}

\def\RR{\mathbb{R}}
\def\Ric{\mathop{\rm Ric}\nolimits}
\def\lie{\mathcal{L}}

\def\Rm{\mathop{\rm Rm}\nolimits}
\def\tr{\mathop{\rm tr}\nolimits}

\def\div{\mathop{\rm div}\nolimits}

\def\Sym{\mathop{\rm Sym}\nolimits}

\newsavebox\CBox
\newcommand\hcancel[2][0.5pt]{%
  \ifmmode\sbox\CBox{$#2$}\else\sbox\CBox{#2}\fi%
  \makebox[0pt][l]{\usebox\CBox}%
  \rule[0.5\ht\CBox-#1/2]{\wd\CBox}{#1}}

\numberwithin{equation}{section}

\title{Linear stability of Steady and Expanding K\"ahler-Ricci Solitons}
\date{}

\author{Lucas Lavoyer}
\address[Lucas Lavoyer]{Mathematisches Institut, Universit\"at M\"unster, 48149 M\"unster, Germany}
\email{lucas.lavoyer@uni-muenster.de}

\author{Adam Thompson}
\address[Adam Thompson]{Mathematisches Institut, Universit\"at M\"unster, 48149 M\"unster, Germany}
\email{a.thompson@uni-muenster.de}

\begin{document}
\raggedbottom

\begin{abstract}
We prove linear stability of all steady and expanding gradient K\"ahler-Ricci solitons. In the expanding case, we prove strict linear stability under very general assumptions. In particular, every asymptotically conical expanding gradient K\"ahler-Ricci soliton is strictly linearly stable. 
\end{abstract}

\maketitle

\section{Introduction}

A \textit{Ricci soliton} is a triple $(M^n,g,X)$ where $(M^n,g)$ is a Riemannian manifold and $X$ is a vector field satisfying the equation 
\begin{equation*}
\Ric(g)-\frac{1}{2}\lie_X(g)+\frac{\varepsilon}{2} g=0.
\end{equation*}
 We call $X$ the \textit{soliton vector field}. Moreover, if $X=\nabla f$ for some real-valued smooth function $f$ on $M$ called the potential function then $(M^n,g,\nabla f)$ is said to be a \textit{gradient} soliton. In this paper, we focus on steady ($\varepsilon =0$) and expanding ($\varepsilon = 1$) gradient Ricci solitons, whose equation reduces to
\begin{equation}
\Ric(g)-\nabla^{2}f+\frac{\varepsilon}{2} g=0.\label{egs-sol-eqn}
\end{equation}

If $(M^n,g,\nabla f)$ denotes a gradient Ricci soliton, the \textit{weighted Laplacian} of a tensor $T$ on $M$ denoted by $\Delta_fT$ is defined by:
\begin{equation*}
\Delta_fT:=\Delta T+\nabla_{\nabla f}T,
\end{equation*}
where $\Delta $ denotes the rough Laplacian associated to the Riemannian metric $g$. The \textit{weighted Lichnerowicz operator} associated to a gradient Ricci soliton is given by 
\begin{equation*}
L_f = - \Delta_f - 2\Rm \ast.
\end{equation*}

Let $d\mu_f := e^f d\mu_g,$ where $d\mu_g$ is the standard (Riemannian) volume form associated to the metric $g.$ We define the weighted $L^2$ space by $L_f^2 := L^2(d\mu_f).$ A priori, $L_f h$ is well defined for every $h \in C_c^{\infty}(M, \Sym^2 T^{\ast}M).$ The definition below is as in \cite[Definition 1.1]{DeruelleStability}.

\begin{defn}\label{def; linear stability}
A gradient Ricci soliton $(M^n,g,\nabla f)$ is \textit{linearly stable} if, for every smooth, compactly supported, symmetric 2-tensor $h,$ it satisfies
\begin{equation*}
Q_f(h):= \int_M \langle L_f h,h\rangle d\mu_f = \int_M \left( |\nabla h|^2 - \langle 2\Rm\ast h, h\rangle \right)d\mu_f \geq 0.
\end{equation*}
If there exists a constant $\lambda >0$ such that $Q_f(h)\geq \lambda \int_M |h|^2 d\mu_f,$ then $(M,g,\nabla f)$ is \textit{strictly stable}.
\end{defn}

\begin{mtheorem}\label{thm; linear stability}
Every non-compact, complete steady or expanding gradient K\"ahler-Ricci soliton is linearly stable.
\end{mtheorem}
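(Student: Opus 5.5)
Since $g$ is Kähler, I will decompose a symmetric 2-tensor $h$ into its $J$-invariant part $h^+$ ($h(J\cdot,J\cdot)=h$) and its $J$-anti-invariant part $h^-$ ($h(J\cdot,J\cdot)=-h$). The operator $L_f$ preserves this splitting: $J$ is parallel, so $\Delta_f$ commutes with the projections, and the Kähler symmetries $\Rm(JX,JY)=\Rm(X,Y)$ show that $\Rm\ast$ preserves both types. The splitting is pointwise orthogonal, so $Q_f(h)=Q_f(h^+)+Q_f(h^-)$, and I will treat the two pieces separately.

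For $h^+$, set $\phi(X,Y)=h^+(JX,Y)$, a real $(1,1)$-form. For forms the Weitzenböck formula reads $\Delta_H = -\Delta + \text{(Ricci term)} - 2\Rm\ast$. On $J$-invariant tensors this identifies $L_f h^+$ with the weighted Hodge Laplacian $\Delta_{H,f}=d d^*_f + d^*_f d$ of $\phi$, corrected by a Ricci term. The key observation is that for the weighted Laplacian the soliton equation turns the Ricci term into $\Ric - \nabla^2 f = -\frac{\varepsilon}{2}g$, a constant multiple of the identity. So I expect, after checking signs carefully,
$$\int\langle L_f h^+,h^+\rangle d\mu_f=\int\big(|d\phi|^2+|d_f^*\phi|^2\big)d\mu_f+c\,\varepsilon\int|h^+|^2d\mu_f,\qquad c\ge0.$$
The computation is done for compactly supported $h$, so integration by parts against $d\mu_f$ is legitimate with no boundary terms. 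The main obstacle is getting the sign right. I would fix conventions by testing on $h=g$ (or $\omega$), where $L_f g$ should equal $-\varepsilon g$ up to the known identity $\Delta_f\Ric+2\Rm\ast\Ric=\dots$. If the sign comes out wrong with weight $e^{f}$, I would recheck that $d\mu_f=e^f d\mu_g$ matches $\Delta_f=\Delta+\nabla_{\nabla f}$, which it does, since this pairing makes $\Delta_f$ symmetric.

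For $h^-$, I identify it with a $T^{1,0}$-valued $(0,1)$-form $\alpha$, or equivalently a symmetric tensor in $\Lambda^{0,1}\otimes\Lambda^{0,1}$. The curvature term acting on such tensors is exactly the one in the Bochner–Kodaira identity for the $\bar\partial$-Laplacian on $\Lambda^{0,1}\otimes T^{1,0}$. Writing $\nabla=\nabla'+\nabla''$ and using the weighted adjoints, I expect the commutation formula to give $Q_f(h^-)=2\int\big(|\bar\partial\alpha|^2+|\bar\partial_f^*\alpha|^2\big)d\mu_f$ plus a Ricci-type term. Again the soliton equation, now in the Kähler form $\Ric+i\partial\bar\partial f$-type identity $R_{i\bar j}-f_{i\bar j}=-\frac{\varepsilon}{2}g_{i\bar j}$, reduces that term to a nonnegative multiple of $\varepsilon|h^-|^2$. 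The only extra ingredient here is that $\nabla^2 f$ is $J$-invariant, because $\nabla f$ is real holomorphic. This is what lets $\nabla_{\nabla f}$ combine cleanly with $\bar\partial^*$.

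Adding the two pieces gives $Q_f(h)\ge0$ in both the steady and the expanding case. The expanding case even yields a lower bound $\ge c\,\varepsilon\|h\|^2_{L^2_f}$ whenever $c>0$, which foreshadows strict stability; completeness and non-compactness are used only so that $d\mu_f$ is the natural weight, and compactly supported test tensors avoid any growth issues.
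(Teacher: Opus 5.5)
There is a genuine gap, and it is in the $J$-anti-invariant step. Your $J$-invariant step is the paper's argument (Proposition~\ref{prop; invariant formula}: $-L_fh\circ J=\Delta_{H,f}\alpha-\varepsilon\alpha$), up to normalisation. Your sign calibration there is off, though: $\Delta_f g=0$, so $L_fg=-2\Ric$, not $-\varepsilon g$.

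The weighted Bochner--Kodaira identity you expect for $h^-$ is false. The adjoint $\bar\partial^*_f:=e^{-f}\bar\partial^*e^{f}=\bar\partial^*-\iota_{X^{0,1}}$ only produces the drift $\nabla_{X^{0,1}}$. By contrast, $\Delta_f$ contains $\nabla_X=\nabla_{X^{1,0}}+\nabla_{X^{0,1}}$. The mismatch $\nabla_{X^{0,1}}-\nabla_{X^{1,0}}=i\nabla_{JX}$ is first order, and holomorphy of $X$ cannot remove it. (For forms the situation is different: $\delta_f=\delta-\iota_X$ carries all of $X$, and Cartan's formula produces $\mathcal{L}_X$.)

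Start from Corollary~\ref{cor; koiso on solitons} and use $\nabla_{JX}H=\mathcal{L}_{JX}H+J(H\nabla X+\nabla X\,H)$. Then all zeroth-order terms, Ricci and $\varepsilon$ alike, cancel exactly. On the $T^{1,0}$-valued $(0,1)$-component one gets
\begin{equation*}
-J(L_fh)^\sharp=2\Box_{\bar\partial,f}H+i\,\mathcal{L}_{JX}H,\qquad \Box_{\bar\partial,f}:=\bar\partial\bar\partial^*_f+\bar\partial^*_f\bar\partial .
\end{equation*}
Since $JX$ is Killing with $JX(f)=0$, the operator $i\mathcal{L}_{JX}$ is symmetric in $L^2_f$, but it is indefinite. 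For example, take the Gaussian expander $(\mathbb{C},g_{\mathrm{Euc}},f=|z|^2/4)$ and $H=z^a\rho(|z|^2)\,d\bar z\otimes\partial_z$ with $\rho$ compactly supported. Then $i\mathcal{L}_{JX}H=(1-\frac a2)H$, which is negative for $a\ge 3$. So $Q_f(h^-)$ is not a weighted $\bar\partial$-energy plus $c\,\varepsilon\|h^-\|^2_{L^2_f}$ with $c\ge 0$. Any other real weight in $\bar\partial^*$ has the same defect, and your argument gives no sign for $Q_f(h^-)$ in either case.

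The missing idea is to remove the weight rather than absorb it into $\bar\partial^*$. Set $k=e^{f/2}h^-$ and $K=-Jk^\sharp=e^{f/2}H$. Lemma~\ref{lem; conjugation} then gives
\begin{equation*}
Q_f(h^-)=\int_M\langle-(\Delta+2\Rm\ast)k,k\rangle\,d\mu_g+\frac12\int_M\Big(R+m\varepsilon+\frac12|\nabla f|^2\Big)|k|^2\,d\mu_g .
\end{equation*}
The first term is unweighted. By Proposition~\ref{prop; koiso} it equals $\int_M\langle\Delta_CK,K\rangle\,d\mu_g\ge 0$, because there the Ricci term is the commutator $[K,\Ric^\sharp]$, which is pointwise orthogonal to $K$. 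The potential is nonnegative only because of the scalar curvature bounds: $R\ge -m$ on complete expanders (Pigola--Rimoldi--Setti) and $R\ge 0$ on complete steadies (Zhang).

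This is where completeness is really used, contrary to your closing remark. No scalar curvature bound appears anywhere in your proposal, which is itself a sign that the anti-invariant estimate cannot close as you set it up.
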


In recent years, there has been a great effort to develop a theory of Ricci flow through singularities in higher dimensions \cite{BamlerSurvey}. In dimension 4, there is some expectation that the remarkable results obtained in dimension 3 \cite{Perelman2003, BamKlein} could be generalised. In dimension 5 or higher, however, the uniqueness of Ricci flow through singularities is not expected in general \cite{AngenentKnopf}; we should look, therefore, for a \textit{generic Ricci flow} through singularities. In this context, it is expected that a generic Ricci flow developing a singularity only encounters linearly stable shrinking and steady solitons as its singularity models, and these models can then be treated via well-behaved surgery procedures. Such solitons should also be the canonical singularity models: the idea being that unstable singularity models can be avoided via a perturbation of the initial metric, and a generic flow should then only encounter stable ones. We note that for compact shrinking solitons, the stability question has been heavily studied; see \cite{CaoZhuCompact, CaoHamiltonIlmanen,CaoZhuSecondVariation} and the references therein. As a source of inspiration for the heuristics above, we note that this has been achieved in the case of mean curvature flow on embedded surfaces in $\mathbb R^3,$ see \cite{CM12,genMCF, genMCF2, BK23} and the references therein.

As part of the study of Ricci flow through singularities in higher dimensions, isolated conical singularities are expected to often arise at first singular times \cite{BamlerSurvey}, and it has been proposed \cite{Gianniotis-Schulze} that expanding gradient Ricci solitons could be used to resolve such singularities. With this goal in mind, there has been a lot of recent interest in asymptotically conical (AC) expanding gradient Ricci solitons \cite{DerGAFA, DeruelleStability, Der-Lamm, CLP24, BamlerChen}. In the K\"ahler setting, the work of Conlon, Deruelle and Sun \cite{ConlonDeruelleJDG, ConlonDeruelleSun} shows existence and uniqueness of expanding gradient K\"ahler-Ricci solitons out of any K\"ahler cone admitting a smooth canonical model, and \cite{ConlonDeruelleSun} shows that this condition on the K\"ahler cone is, in fact, necessary and sufficient for it to admit a smoothing via an expanding K\"ahler-Ricci soliton. This contrasts with the Riemannian case, where the best known results usually require some positive curvature assumption on the cone.

The stronger results in the K\"ahler case raise the question of general uniqueness of these solutions without restricting ourselves to the K\"ahler-Ricci flow. In this direction, Chen \cite{LongtengUniqueness, LongtengStability} shows uniqueness and stability of the K\"ahler-Ricci flow associated to an AC expanding gradient K\"ahler-Ricci soliton within the expander's K\"ahler class, assuming certain conditions hold; building on these ideas, \cite{CHL26a} shows that a solution flowing out of a conical singularity, in many cases, satisfies a type I curvature bound and is modelled on the associated K\"ahler-Ricci expander. See also \cite{BamlerChenConlon}, where the authors give an example of a K\"ahler cone that does not admit a smoothing by a K\"ahler expander by \cite{ConlonDeruelleSun}, but it is smoothed out by a non-K\"ahler expander.

Inspired by this, we would like to investigate the uniqueness of steady and expanding K\"ahler-Ricci solitons in greater generality, i.e. not just as the unique solitons, or the unique K\"ahler-Ricci flows, but as the unique flows out of such cones. Studying stability of these solutions is then a natural first step. In real dimension 4, the work of Naff and Ozuch \cite{NaffOzuch} shows that steady and expanding gradient K\"ahler-Ricci solitons are stable in the sense of Definition \ref{def; linear stability}. Theorem \ref{thm; linear stability}, then, generalises their result to every dimension. Strict stability often yields dynamical consequences and should imply local uniqueness, so we also study strict stability in the expanding case. We consider the following set of possible conditions, and explain how they naturally fit into the programme above.

 \medskip
\noindent\textbf{Condition $(\mathscr{E})$.}
We say an expanding gradient K\"ahler--Ricci soliton $(M,g,\nabla f)$
satisfies $(\mathscr{E})$ if $\sup_M |\Rm(g)|< \infty$ and
at least one of the following holds:
\begin{enumerate}[(i)]
    \item\label{condition; f is proper}
    $f$ is proper;
    \item\label{condition; R to 0}
    $R(x)\to 0$ as $x\to\infty$;
    \item\label{condition; R nonneg}
    $R\geq 0$ on $M$.
\end{enumerate}
For each of these, we show strict stability for the expanding soliton. We note that assumption \eqref{condition; R to 0} holds, for instance, whenever the expanding soliton being considered is asymptotically conical to a smooth cone. Assumption \eqref{condition; R nonneg} is also natural in the context of flowing out of singularities: the expected small-scale behaviour of a Ricci flow through singularities is described by a shrinker-cone-expander transition \cite{JianSong}, with both the shrinker and the expander being asymptotic to the cone (see \cite{CHL26b} for a proof of this behaviour in complex dimension 2 and in higher dimensional K\"ahler-Ricci flow under the Calabi ansatz). In this case, since the shrinker has nonnegative scalar curvature \cite{BinglongChen}, it is expected that the cone and expander should also have this property.

\begin{mtheorem}\label{thm; strict stability}
Let $(M^{2m},g,\nabla f)$ be a non-compact, complete expanding gradient K\"ahler-Ricci soliton satisfying $(\mathscr{E})$. Then it is strictly linearly stable. Furthermore, if we define 
\begin{equation*}
\inf_{h\in H^1_f\setminus{\{0\}}} \frac{Q_f(h)}{\|h\|^2_{L^2_f}} =: \lambda_1(L_f) \quad \text{and } c_S:= \min \{1, \frac 12 \inf_M (R+m)\},
\end{equation*}
then $\lambda_1(L_f) \geq c_S >0.$ In particular, if condition \eqref{condition; R nonneg} holds and $m\geq 2,$ then $\lambda_1 \geq 1.$
\end{mtheorem}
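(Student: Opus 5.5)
The plan is to exploit the Kähler structure to split $L_f$ into two pieces, each of which becomes a weighted Hodge/Dolbeault Laplacian plus a zeroth-order term coming from $\Ric_f:=\Ric-\nabla^2 f=-\frac{\varepsilon}{2}g$. On a Kähler manifold, $\Sym^2T^*M$ splits orthogonally into $J$-invariant tensors $h^+$ and $J$-anti-invariant tensors $h^-$. Since $\nabla J=0$ and $\Rm$ is $J$-invariant, both $\Delta_f$ and $\Rm\ast$ preserve this splitting, so $Q_f(h)=Q_f(h^+)+Q_f(h^-)$.

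\emph{$J$-invariant part.} Here $h^+$ corresponds to a real $(1,1)$-form $\varphi=h^+(J\cdot,\cdot)$. The weighted Weitzenböck formula for $2$-forms, with respect to $d\mu_f$ and $\delta_f$ the $L^2_f$-adjoint of $d$, reads $\Delta_{d,f}\varphi=-\Delta_f\varphi+\Ric_f\circ\varphi-2\Rm\ast\varphi$. For $(1,1)$-forms on a Kähler manifold the curvature term matches the Lichnerowicz term. Integrating against $\varphi$ should give
\begin{equation*}
Q_f(h^+)=\|d\varphi\|^2_{L^2_f}+\|\delta_f\varphi\|^2_{L^2_f}+\varepsilon\|h^+\|^2_{L^2_f}\ \geq\ \varepsilon\|h^+\|^2_{L^2_f},
\end{equation*}
up to the exact constant, which I will fix by a careful computation.

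\emph{$J$-anti-invariant part.} Here $h^-$ corresponds to a $T^{1,0}M$-valued $(0,1)$-form $\alpha$. The analogous Bochner--Kodaira--Nakano identity for the $e^{f}$-weighted $\bar\partial$-Laplacian on $T^{1,0}M$ has the Chern curvature of $T^{1,0}M$ (essentially $\Rm$ and $\Ric$) as its zeroth-order term. After substituting $\Ric_f=-\frac{\varepsilon}{2}g$, I expect
\begin{equation*}
Q_f(h^-)=\|\bar\partial\alpha\|^2+\|\bar\partial^{*}_f\alpha\|^2+c\,\varepsilon\|h^-\|^2
\end{equation*}
for some $c\ge0$. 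Together with the $J$-invariant case this proves Theorem \ref{thm; linear stability} for $\varepsilon=0,1$, for compactly supported $h$.

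For the quantitative bound in the expanding case I will use a second, independent lower bound for $\|\nabla h\|^2_{L^2_f}$. Tracing \eqref{egs-sol-eqn} with $\varepsilon=1$ gives $\Delta f=R+m$. For compactly supported $u$, integration by parts with weight $e^f$ gives
\begin{equation*}
\int u^2\Delta f\,d\mu_f=-\int\bigl(2u\langle\nabla u,\nabla f\rangle+u^2|\nabla f|^2\bigr)d\mu_f\leq\int|\nabla u|^2d\mu_f .
\end{equation*}
With $u=|h|$ and Kato's inequality this yields $\|\nabla h\|^2_{L^2_f}\geq \inf_M(R+m)\,\|h\|^2_{L^2_f}$. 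I will then write $Q_f=\tfrac12 Q_f+\tfrac12 Q_f$. One half is estimated via the Bochner identities above, which gives the constant $1$ or absorbs the curvature term. The other half uses $\|\nabla h\|^2\geq\inf(R+m)\|h\|^2$, which explains $c_S=\min\{1,\tfrac12\inf(R+m)\}$.

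The main obstacle is making these two halves fit together, because the curvature term $-2\langle\Rm\ast h,h\rangle$ must be fully controlled by one half. I would first try to show, via the Bochner formulas, that $-2\langle\Rm\ast h,h\rangle$ integrates to at least $-\|\nabla h\|^2+\|h\|^2$, and then average this with the Hardy-type bound. The remaining steps are:
\begin{itemize}
\item Show $\inf_M(R+m)>0$ under $(\mathscr{E})$. By the maximum principle applied to $\Delta_fR=-|\Ric+\frac12g|^2$-type equations one has $R+m>0$. Conditions (i)--(iii) then upgrade this to a positive infimum: (iii) gives $\inf(R+m)\ge m$ directly; (ii) gives $R+m\to m$ at infinity; (i) with bounded curvature lets one use the asymptotics of $f$.
\item Extend from $C^\infty_c$ to $H^1_f$ by density, using $\sup|\Rm|<\infty$ so that $Q_f$ is continuous on $H^1_f$.
\item For the final claim: if $R\ge0$ and $m\ge2$, then $\tfrac12\inf(R+m)\ge1$, so $c_S=1$ and $\lambda_1\ge1$.
\end{itemize}
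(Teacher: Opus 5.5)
Your $J$-invariant step is the paper's (Proposition~\ref{prop; invariant formula}). The $J$-anti-invariant part, which is where $\frac12\inf_M(R+m)$ has to come from, has a genuine gap.

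First, the weighted Bochner--Kodaira--Nakano identity you expect does not have the form $\|\bar\partial\alpha\|^2+\|\bar\partial^*_f\alpha\|^2+c\,\varepsilon\|h^-\|^2$. The $e^f$-weighted $\Box_{\bar\partial}$ only produces the derivative $\nabla_{X^{0,1}}$. By contrast, $-J(L_fh)^\sharp=\Delta_C H-\lie_X H$ (Corollary~\ref{cor; koiso on solitons}) contains the full $\nabla_X$. So the two differ by a first-order term along the Killing field $JX$, not by a multiple of $\varepsilon|h|^2$. By my own check (not a computation in the paper), this term is symmetric on $L^2_f$ but takes both signs; already on the Gaussian expander on $\mathbb{C}^m$ it has opposite signs on modes like $z_1^k$ versus $\bar z_1^k$.

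Second, and decisively, the averaging step cannot work. Your Hardy-type bound controls only $\|\nabla h\|^2_{L^2_f}$, while each half of $Q_f=\frac12Q_f+\frac12Q_f$ still carries $-\int_M\langle\Rm\ast h,h\rangle\,d\mu_f$. A Bochner identity rewrites the whole form; it does not bound the curvature term by part of the gradient term. Your fallback target $-2\int_M\langle\Rm\ast h,h\rangle\,d\mu_f\ge-\|\nabla h\|^2_{L^2_f}+\|h\|^2_{L^2_f}$ is literally $Q_f(h)\ge\|h\|^2_{L^2_f}$. That makes the Hardy bound redundant, and since Bochner identities are local and blind to $(\mathscr{E})$, it fails in general. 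Take $\Sigma\times\mathbb{H}^2$, with $\Sigma$ a closed hyperbolic surface, both factors normalised to $\Ric=-\frac12g$ and $f$ constant. Then $h=\phi\,\mathrm{Re}(q)$, with $q$ a holomorphic quadratic differential on $\Sigma$, has Rayleigh quotient $\|\nabla\phi\|^2/\|\phi\|^2$, which goes down to $\frac18$. So your $c_S$ is reverse-engineered, not derived.

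The missing idea is to conjugate by $e^{f/2}$ at the tensor level and use \emph{unweighted} nonnegativity. With $k=e^{f/2}h_A$ and $K=-Jk^\sharp$, Lemma~\ref{lem; conjugation} gives
\[
Q_f(h_A)=Q_0(k)+\frac12\int_M\Big(R+m+\frac12|\nabla f|^2\Big)|h_A|^2\,d\mu_f .
\]
Here $Q_0(k)=\int_M\langle\Delta_C K,K\rangle\,d\mu_g\ge0$ by \eqref{eq; good koiso}, since $\Delta_C$ is nonnegative with respect to $d\mu_g$, not $d\mu_f$; this is why the weighted route stalls. The conjugation puts the whole curvature term into a nonnegative form and gives $Q_f(h_A)\ge\frac12\inf_M(R+m)\|h_A\|^2_{L^2_f}$ at once. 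Your Hardy inequality is the scalar shadow of this identity, and passing to $|h|$ via Kato discards exactly the structure that is needed.

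Separately, for condition (i), ``use the asymptotics of $f$'' is not an argument. The paper supposes $\inf_M u=0$ for $u=R+m$ and minimises $u+\delta(f+m)$. At the minimum point it uses $\Delta_f u=u-2|\nabla^2 f|^2$, $\Delta_f(f+m)=f+m$ and $\nabla^2 f(\nabla f,\cdot)=\frac{1+\delta}{2}\nabla f$ to force $\nabla f(x_\delta)=0$, which contradicts $\inf_M(f+m)>0$.

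Your treatment of (ii), (iii), the extension to $H^1_f$, and the final claim $\lambda_1\ge1$ is fine.
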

\begin{rk}
If we only considered, as in Theorem \ref{thm; linear stability}, symmetric 2-tensors which are smooth and have compact support, the bounded curvature assumption is not actually a necessary part of $(\mathscr{E});$ see Section \ref{sec; stability} for more details. 
\end{rk}

As a direct corollary, the work of Deruelle \cite{DeruelleStability} gives the following dynamical stability result for the Ricci flow induced by an expanding gradient K\"ahler-Ricci soliton. We note that the assumptions will hold, for instance, for AC expanders.

\begin{mcor}\label{cor; dynamic stability}
Let $(M,g,\nabla f)$ be a complete, non-compact expanding gradient K\"ahler-Ricci soliton satisfying $|\Rm(g)| \leq K,$  and such that $f$ is proper. Then it is dynamically stable. More precisely, if $\bar g$ is a continuous metric on $M$ satisfying
\begin{equation*}
\|g-\bar g\|_{L^2_f} =I <\infty,
\end{equation*}
then there exists $\beta =\beta( n,I, K, \lambda_1)>0$ such that if $\bar g$ satisfies $\|g-\bar g\|_{L^\infty} \leq \beta,$ then there exists an immortal solution to the modified Ricci harmonic map heat flow starting from $\bar g$ and converging exponentially fast to $ g$ in the global $C^k$-sense, for any $k\in \mathbb N.$ Furthermore, the corresponding Ricci flow exists for all positive times and converges to $g$ after normalisation and pulling back by diffeomorphisms.
\end{mcor}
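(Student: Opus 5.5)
The corollary should follow directly from Deruelle's dynamical stability theorem for expanding gradient Ricci solitons \cite{DeruelleStability}. I would first recall the precise form of that result. For a complete, non-compact expanding gradient Ricci soliton with bounded curvature, a proper potential function $f$, and strictly positive bottom of the spectrum $\lambda_1(L_f)>0$ for the weighted Lichnerowicz operator on $H^1_f$, it produces, for every continuous metric $\bar g$ that is $L^\infty$-close to $g$ and has $\|g-\bar g\|_{L^2_f}$ finite, an immortal solution of the modified Ricci harmonic map heat flow. This solution converges exponentially to $g$ in every $C^k$, and the constant $\beta$ depends only on $n$, $I$, $K$ and $\lambda_1$. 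The whole task is therefore to check Deruelle's hypotheses under the assumptions of the corollary.

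The key input is Theorem \ref{thm; strict stability}. The assumptions $|\Rm(g)|\le K$ and $f$ proper are exactly condition $(\mathscr{E})$ with alternative \eqref{condition; f is proper}. Theorem \ref{thm; strict stability} then gives $\lambda_1(L_f)\ge c_S>0$. This infimum is taken over all of $H^1_f$, not merely over compactly supported smooth tensors, which is the form of spectral gap Deruelle uses. So the strict positivity hypothesis holds.

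Before invoking Deruelle, I would also confirm that his remaining structural hypotheses follow from bounded curvature and properness of $f$ on an expander:
\begin{enumerate}[(a)]
\item his normalisation $f\sim r^2/4$ together with the expander identity $|\nabla f|^2 - R = f$ up to a constant;
\item weighted volume growth;
\item the Gaussian weight $e^f$ making $L^2_f$ a genuinely weighted space.
\end{enumerate}
Standard potential estimates for expanders with bounded curvature and proper $f$ give $f$ comparable to $r^2/4$, which is what Deruelle's argument requires. If his theorem is stated with slightly different normalisation conventions, such as the sign of $f$, the factor in front of $g$, or $d\mu_f=e^f d\mu_g$, I would translate between conventions. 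I expect this bookkeeping to be the only real obstacle.

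Finally, the statement about the Ricci flow follows from the usual correspondence, also given in \cite{DeruelleStability}. One pulls the harmonic map heat flow solution back by the diffeomorphisms generated by the associated time-dependent vector field, and rescales by the expanding factor. Exponential convergence of the modified flow then transfers to convergence of the normalised Ricci flow to $g$ modulo diffeomorphisms.
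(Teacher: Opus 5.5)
Your proposal follows the paper's proof. Theorem \ref{thm; strict stability} under alternative \eqref{condition; f is proper} of $(\mathscr{E})$ gives $\lambda_1(L_f)>0$ on $H^1_f$, bounded curvature and properness of $f$ yield Deruelle's assumption $(\mathscr{H})$, and \cite[Theorems 1.2--1.3]{DeruelleStability} then give the conclusion. The paper does the middle step by citing \cite[Lemma 7]{Chan23} rather than the unnamed ``standard potential estimates'' you invoke; also, in the paper's conventions the expander identity is $|\nabla f|^2+R=f$ (after normalising $C_0=0$), not $|\nabla f|^2-R=f$.
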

\begin{proof}
By \cite[Lemma 7]{Chan23}, bounded curvature and properness of $f$ imply Deruelle's assumption $(\mathscr{H})$ \cite{DeruelleStability}, and the result then follows from \cite[Theorems 1.2-1.3]{DeruelleStability}.  
\end{proof}

\subsection{Outline of the paper}
We briefly explain the main steps of the proofs of theorems \ref{thm; linear stability} and \ref{thm; strict stability}, while providing an outline of the paper. In Section \ref{sec; prelim} we fix some notation and the conventions for the rest of the paper, as well as provide some background results that will be useful later. In Section \ref{sec; action on sym tensors}, we study the space of symmetric 2-tensors on a K\"ahler-Ricci soliton. Given the K\"ahler condition, this space splits naturally into $J$-invariant and $J$-anti-invariant tensors, giving us two orthogonal subspaces. We can then study the action of $L_f$ on each of these subspaces. In Subsection \ref{subsection; J-invariant}, the action of the weighted Lichnerowicz operator is much simpler, and following the method in \cite{NaffOzuch} allows us to generalise their formulae in any dimension. 

In Subsection \ref{subsection; J-anti-invariant}, after identifying a $J$-anti-invariant 2-tensor with a suitable endomorphism of the tangent bundle $TM,$ we show that the unweighted version of the Lichnerowicz operator, $-\Delta -2\Rm\ast,$ sometimes also called the Einstein operator in the literature, can be identified with a complex Laplacian via a Weitzenb\"ock formula; this is essentially the work of Dai, Wang and Wei \cite{DWW} in the context of K\"ahler-Einstein metrics (see also \cite{KroenckePHD}), which was also used in \cite{HallMurphy} in the case of compact Ricci solitons, and goes back to the work of Koiso \cite{Koiso}. This will be the first main ingredient we use to prove our main results. We finish the section with Corollary \ref{cor; koiso on solitons}, providing a nice Weitzenb\"ock formula for $L_f$ on any gradient K\"ahler-Ricci soliton. In Section \ref{sec; stability}, we prove our main theorems. First, Lemma \ref{lem; conjugation} introduces the second main ingredient of the proof, a conjugation identity for $L_f.$ This identity is used by Deruelle in \cite{DeruelleStability} and allows us to study $L_f$ by studying a Schr\"odinger type operator, $-\Delta +V,$ where $V$ will be a particularly well-behaved potential depending on the curvature of $(M,g,\nabla f)$ and the potential function $f.$ 

Together with the Weitzenb\"ock formula from before, we then proceed to prove Theorem \ref{thm; linear stability} in the expanding and then in the steady case: the proofs are essentially the same, but we split them for clarity. Finally, we prove the strict stability of expanding K\"ahler-Ricci solitons under condition $\mathscr{E},$ Theorem \ref{thm; strict stability}, in Subsection \ref{subsection; strict stability}.

\subsection{Acknowledgements} The authors thank Alix Deruelle and Tristan Ozuch for helpful discussions. Both authors are funded by the German Research Foundation (DFG) – Project-ID 427320536 – SFB 1442, and by Germany’s Excellence Strategy EXC 2044/2 390685587, Mathematics Münster: Dynamics–Geometry–Structure.

\subsection{AI Disclosure} The writing and mathematical content of this paper is due to the authors. ChatGPT Pro was otherwise used only to assist with literature searches at the start of this project and to identify inconsistencies in notation and minor errors.

\section{Preliminaries}\label{sec; prelim}

\subsection{Notation and conventions}

We collect here the basic notation and conventions we use throughout the paper. For further details on the construction of some of the objects here, we refer the reader to \cite{Demailly} and \cite[Appendix B]{NaffOzuch}, as well as the references therein.
We adhere to the following curvature convention:
\begin{equation*}
\begin{split}
&\Rm(g)(U,V,W,Z)= g(([\nabla_U,\nabla_V]-\nabla_{[U,V]})W,Z),\\
&\Ric(g)(U,V)=\sum_i  \Rm(g)(U,e_i,e_i,V).
\end{split}
\end{equation*}
We write $\Rm \ast h$ for the curvature operator acting on symmetric 2-tensors; in coordinates, we define it by $(\Rm \ast h)_{ij} := R_{iklj} h^{kl}.$ It will be convenient to define the curvature operator $\mathcal{R}$ acting on a real 2-form $\alpha$ as 
\begin{equation}
(\mathcal{R}\alpha)(U,V) := -\sum_{i,j} \Rm(e_i,e_j, U,V)\alpha(e_i,e_j).
\end{equation}

The musical isomorphisms are defined as usual, with \(\flat : TM \to T^{\ast}M\) lowering indices, and \(\sharp : T^*M \to TM\) raising them. The commutator of two endomorphisms $A, B$ is given by $[A,B] = AB-BA.$ We write $\Delta = \tr_g\nabla^2=-\nabla^\ast\nabla$ and $h(U,V)=g(h^\sharp U,V)$. For a real $p$-form $\alpha$, the norm we use is 
\begin{equation*}
|\alpha|^2=(p!)^{-1}\sum_{i_1,\dots, i_p} |\alpha_{i_1\dots i_p}|^2.
\end{equation*}

The weighted divergence is defined by $\div_f(\cdot) = \div (\cdot) + \langle X, \cdot \rangle.$ For a symmetric 2-tensor $h$ we have
\begin{equation*}
(\div_f h)(U) = \sum_i (\nabla_{e_i} h)(e_i,U) + h(X,U).
\end{equation*}
The codifferential operator in coordinates is given by $\delta \alpha = - \sum_i \iota_{e_i} \nabla_{e_i} \alpha$ and its weighted version is defined by 
\begin{equation*}
\delta_f := e^{-f}\delta e^f = \delta - \iota_X.
\end{equation*}
It is then straightforward to see that $d$ and $\delta_f$ are formal adjoints in $L^2_f$. 

With the above, we define the Hodge Laplacian by $\Delta_H := -(d\delta +\delta d),$ and the weighted Hodge Laplacian by 
\begin{equation*}
\Delta_{H,f} := -(d\delta_f + \delta_f d) = \Delta_H + \lie_X.
\end{equation*}
For compactly supported smooth forms, $-\Delta_{H,f}$ is then a nonnegative operator.

On the holomorphic tangent bundle $T^{1,0}M,$ we define the Dolbeault Laplacian (see \cite[Chapter VI]{Demailly}) by
\begin{equation*}
\Box_{\bar \partial} := \bar\partial \bar{\partial}^\ast + \bar{\partial}^\ast\bar\partial.
\end{equation*}

\subsection{Steady and Expanding gradient Ricci solitons}

The following lemma states well-known identities for gradient Ricci solitons that will be useful to us later. Given that we are interested in K\"ahler-Ricci solitons in this work, we fix $n=2m$ so that $m$ is the complex dimension of $M.$ 
\begin{lemma}\label{id-SGE}
Let $(M^{2m},g,\nabla^g f)$ be a steady or expanding gradient Ricci soliton. Then:
\begin{eqnarray}
&&\Delta f = R+m\varepsilon, \label{equ:1} \\
&&\nabla R+ 2\Ric(g)(\nabla f)=0, \label{equ:2} \\
&&\arrowvert \nabla f \arrowvert^2+R=\varepsilon f+C_0, \label{equ:3}\\
&& \Delta_f R = -\varepsilon R -2|\Ric(g)|^2.
\end{eqnarray}
\end{lemma}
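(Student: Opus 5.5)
Throughout we use the soliton equation \eqref{egs-sol-eqn} in the form $\nabla^2 f = \Ric + \frac{\varepsilon}{2} g$, together with the curvature conventions fixed above. We derive the four identities in order, each from the previous ones.

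\textbf{Step 1: the trace identity.} Tracing \eqref{egs-sol-eqn} with $n=2m$ gives
\[
\Delta f = R + \frac{\varepsilon}{2}\cdot 2m = R + m\varepsilon,
\]
which is \eqref{equ:1}.

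\textbf{Step 2: the gradient identity.} Take the divergence of $\nabla^2 f=\Ric+\frac{\varepsilon}{2}g$. On the left, commuting covariant derivatives gives
\[
\nabla_i\nabla_i\nabla_j f = \nabla_j \Delta f + R_{jk}\nabla_k f
\]
with our sign convention; only the sign of the Ricci term needs checking. On the right, the contracted Bianchi identity gives $\div \Ric = \frac12 \nabla R$. Substituting $\nabla \Delta f=\nabla R$ from Step 1 yields
\[
\nabla R + \Ric(\nabla f) = \tfrac12 \nabla R,
\]
that is, $\nabla R = -2\Ric(\nabla f)$, which is \eqref{equ:2}.

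\textbf{Step 3: the conserved quantity.} Compute
\[
\nabla |\nabla f|^2 = 2\nabla^2 f(\nabla f) = 2\Ric(\nabla f) + \varepsilon \nabla f .
\]
Adding $\nabla R = -2\Ric(\nabla f)$ from Step 2 gives $\nabla(|\nabla f|^2 + R - \varepsilon f) = 0$. Since $M$ is connected, $|\nabla f|^2+R-\varepsilon f$ is a constant $C_0$, which is \eqref{equ:3}.

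\textbf{Step 4: the evolution-type identity for $R$.} Take the divergence of $\nabla R = -2\Ric(\nabla f)$:
\[
\Delta R = -2(\div\Ric)(\nabla f) - 2\langle \Ric, \nabla^2 f\rangle
= -\langle \nabla R,\nabla f\rangle - 2|\Ric|^2 - \varepsilon R .
\]
Here we used $\div\Ric=\frac12\nabla R$ and $\langle\Ric,\Ric+\frac{\varepsilon}{2}g\rangle = |\Ric|^2+\frac{\varepsilon}{2}R$. Moving the gradient term to the left gives
\[
\Delta_f R = \Delta R + \langle\nabla f,\nabla R\rangle = -\varepsilon R - 2|\Ric|^2 .
\]

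\textbf{Main obstacle.} The only delicate point is the sign of the commutator term in Step 2. One must check that, under the convention $\Rm(U,V,W,Z)=g([\nabla_U,\nabla_V]W-\nabla_{[U,V]}W,Z)$ and $\Ric(U,V)=\sum_i\Rm(U,e_i,e_i,V)$, the commutator produces $+\Ric(\nabla f)$. The quickest check is the round sphere: there $\nabla_i\nabla_i\nabla_j f=\nabla_j\Delta f+R_{jk}\nabla_k f$ with positive Ricci is the standard Bochner sign. The resulting identities also agree with the well-known formulas for expanders.
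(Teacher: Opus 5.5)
Your proof is correct and follows the standard derivation: trace the soliton equation, take its divergence using the contracted Bianchi identity, integrate the resulting gradient identity on connected $M$, and take one more divergence. This is what the source the paper cites without giving a proof (Chow--Lu--Ni, Chapter 4) does. The sign you flagged in Step 2 is indeed right under the paper's conventions: $\Ric(U,V)=\sum_i\Rm(U,e_i,e_i,V)$ gives $\Ric=(n-1)g$ on the unit sphere, so the Ricci identity yields $\div\nabla^2 f=\nabla\Delta f+\Ric(\nabla f)$.
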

\begin{proof}
See \cite[Chapter 4]{ChowLuNi}.
\end{proof}
In the expanding case, after adding a constant to $f,$ we can always fix $C_0=0.$ For ease of notation, it will sometimes be convenient to denote the soliton vector field by $X$ instead of $\nabla f$ and we will use both interchangeably. 

\section{The space of symmetric 2-tensors on gradient K\"ahler-Ricci solitons}\label{sec; action on sym tensors}

For a gradient K\"ahler-Ricci soliton $(M,g,X)$ with complex structure $J,$ we denote the space of symmetric 2-tensors by $\Sym^2 := \Sym^2(T^{\ast}M).$ By setting $J \cdot h (U,V):= h(JU,JV),$ we can decompose $\Sym^2$ into the orthogonal subspaces corresponding to the $+1$ and $-1$ eigenvalues of $J\cdot$ (see \cite{KroenckePHD, NaffOzuch}). We define
\begin{equation}
\begin{split}
&\Sym_I^2 := \{ h\in \Sym^2 : J\cdot h=h\},\\
& \Sym_A^2 := \{ h\in \Sym^2 : J\cdot h=-h\},
\end{split}
\end{equation}
respectively called the $J$-invariant and the $J$-anti-invariant subspaces. For an arbitrary $h\in \Sym^2,$ we write $h=h_I + h_A$ for its decomposition. Since $J$ is parallel and $\Rm$ is $J$-invariant, it is straightforward to see that $\Delta, \Delta_f, \nabla,$ $\Rm\ast$ and $L_f$ all preserve this decomposition. 

\subsection{The $J$-invariant subspace}\label{subsection; J-invariant}

We start by analysing the action of $L_f$ on the $J$-invariant subspace. In this case, this is well-understood, so we will be brief. See also \cite{HallMurphy2, NaffOzuch}.

\begin{lemma}
The map \begin{equation}
\Sym^2_I\simeq \Lambda^{1,1}_\RR,\qquad h\mapsto h\circ J := h(J\cdot,\cdot),
\end{equation} is an isomorphism, where \(\Lambda^{1,1}_\RR\subset \Lambda^{1,1}\) is the bundle of real \((1,1)\)-forms on \(M\).
\begin{proof}
The map is clearly a linear isomorphism since it has the explicit inverse, \(\alpha\mapsto h:=-\alpha(J\cdot,\cdot)\); the non-trivial claim is that the image is precisely \(\Lambda^{1,1}_\RR\). The proof of this is the same as in the special case where \(h=g\) is the K\"ahler metric and \(\alpha=\omega\) is the K\"ahler form.
\end{proof}
\end{lemma}

Since \(\nabla J=0\), this map intertwines the rough Laplacian on the respective spaces: \(\Delta h \circ J = \Delta \alpha\). With our convention, we have $|h\circ J|^2 = \frac 12 |h|^2.$ Recall also (see e.g. \cite[Chapter 1, Section I]{Besse}) that the rough Laplacian on 2-forms is related to the Hodge Laplacian via the Weitzenb\"ock formula:\[\Delta_H \alpha =\Delta \alpha -(\Ric\cdot \alpha +\alpha\cdot \Ric)+\mathcal{R}(\alpha),\] where \((\Ric\cdot \alpha)_{ij} = \Ric_i^k\alpha_{kj}\) and \(( \alpha\cdot \Ric)_{ij} = \alpha_{ik}\Ric^k_j\) in local coordinates.

\begin{lemma}
If \(h\in \Sym^2_I\) and \(\alpha=h\circ J\), then \(\Rm*h\in \Sym^2_I\) and \[(2\Rm*h)\circ J= \mathcal{R}(\alpha). \]
\end{lemma}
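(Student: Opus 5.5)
The proof is a direct pointwise computation in a unitary frame, using only the algebraic symmetries of the Kähler curvature tensor. No soliton equation is needed. Throughout we use that $\Rm(U,V,JW,JZ)=\Rm(U,V,W,Z)$, which follows from $\nabla J=0$. Combined with pair symmetry this gives $\Rm(JU,JV,W,Z)=\Rm(U,V,W,Z)$ as well.

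\emph{Step 1: $J$-invariance.} With $(\Rm*h)(U,V)=\sum_{k,l}\Rm(U,e_k,e_l,V)h(e_k,e_l)$, we evaluate $(\Rm*h)(JU,JV)$. We move $J$ from the first and last slots onto the middle slots: $\Rm(JU,e_k,e_l,JV)=\Rm(U,-Je_k,e_l,V)$ after using $J$-invariance in the first pair and then the second pair in the form $\Rm(\cdot,\cdot,e_l,JV)=-\Rm(\cdot,\cdot,Je_l,V)$. We then substitute the frame $e_k\mapsto Je_k$, $e_l\mapsto Je_l$, which is again orthonormal. Since $h(Je_k,Je_l)=h(e_k,e_l)$, the sum returns to $(\Rm*h)(U,V)$, so $\Rm*h\in\Sym^2_I$. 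I will track the signs carefully here, but the mechanism is just frame invariance.

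\emph{Step 2: the identity.} We compute
\[
(2\Rm*h)(JU,V)=2\sum_{k,l}\Rm(JU,e_k,e_l,V)\,h(e_k,e_l),
\]
and write $h(e_k,e_l)=-\alpha(Je_k,e_l)=\alpha(e_k,Je_l)$. We change variables $e_l\mapsto -Je_l$ (using $J^2=-1$) to obtain
\[
2\sum_{k,l}\Rm(JU,e_k,Je_l,V)\,\alpha(e_k,e_l)=2\sum_{k,l}\Rm(JU,e_k,Je_l,V)\,\alpha(e_k,e_l)
\]
up to a sign that must be tracked. The key tool is the first Bianchi identity applied to the expression $\Rm(JU,e_k,Je_l,V)$, or equivalently, by pair symmetry, to $\Rm(Je_l,V,JU,e_k)$. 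Together with $J$-invariance this gives
\[
\Rm(e_k,Je_l,JU,V)=\Rm(e_k,Je_l,U,\,\cdot\,)\text{-type terms},
\]
and the standard Kähler identity
\[
\Rm(X,JY,Z,W)+\Rm(Y,JX,Z,W)\ \text{related to}\ \Rm(X,Y,JZ,W).
\]
The cleanest route is to prove the Kähler version of the identity $\sum_{k,l}\Rm(e_k,e_l,Z,W)\,\beta(e_k,e_l)$ for $(1,1)$-forms $\beta$. For $\beta\in\Lambda^{1,1}_\RR$ we have $\beta(J\cdot,J\cdot)=\beta$. The Bianchi identity yields
\[
\sum_{k,l}\Rm(Z,e_k,e_l,W)\,\beta(Je_k,e_l)\ \text{-type}=\tfrac12\sum_{k,l}\Rm(e_k,e_l,Z,JW)\,\beta(e_k,e_l),
\]
because the third Bianchi term is converted, via the $J$-invariance of both $\Rm$ and $\beta$, back into a copy of the first term. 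Applying this with $\beta=\alpha$, the factor $2$ cancels the $\tfrac12$, and the left side becomes $-\sum_{k,l}\Rm(e_k,e_l,U,V)\,\alpha(e_k,e_l)=\mathcal R(\alpha)(U,V)$.

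\emph{Main obstacle.} The main difficulty is the sign and normalization bookkeeping in the Bianchi step, in particular checking that the leftover term is exactly half of the sum and has the same sign as the first term. To control this, I would first verify the whole identity on the test case $h=g$, $\alpha=\omega$. There it reduces to $2\Ric\circ J=\mathcal R(\omega)$, that is, $\mathcal R(\omega)=2\rho$ where $\rho$ is the Ricci form; this is the classical identity $\rho(U,V)=\tfrac12\sum_k\Rm(U,V,e_k,Je_k)$, up to the sign conventions of the paper. I would use this case to fix all signs, and then carry out the general computation in exactly the same way, with $h$ in place of $g$.
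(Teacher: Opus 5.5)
Your strategy is the paper's: rewrite $\mathcal R(\alpha)$ via the first Bianchi identity, and use the $J$-invariance of $\Rm$ and of $h$ to show that the two Bianchi terms other than the $\mathcal R$-term coincide, which gives the factor $2$. Your auxiliary identity for real $(1,1)$-forms, read literally without the ``-type'' hedge, is even true. The gap is that the proposal never carries out the one step that constitutes the lemma.

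\emph{Step 2.} It consists of placeholders (``$\ldots$-type terms'', ``related to''). It contains a display whose two sides are literally identical; the substitution $e_l\mapsto -Je_l$ actually gives $(2\Rm*h)(JU,V)=-2\sum_{k,l}\Rm(JU,e_k,Je_l,V)\alpha(e_k,e_l)$. It then asserts, without computation, that the third Bianchi term ``is converted back into a copy of the first term''. That assertion, with its sign, is exactly what has to be proved.

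\emph{The test case.} Calibrating on $h=g$ does not fill this in. One test case cannot show that the two terms agree for every $h\in\Sym^2_I$. Moreover, the calibrating identity is itself proved by the same Bianchi computation. In the paper's conventions it reads $\rho(U,V)=-\tfrac12\sum_k\Rm(U,V,e_k,Je_k)$, with a minus sign.

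\emph{Step 1.} The displayed intermediate is wrong. The correct formula is $\Rm(JU,e_k,e_l,JV)=\Rm(U,Je_k,Je_l,V)$. The substitution $e_k,e_l\mapsto Je_k,Je_l$ together with $h(Je_k,Je_l)=h(e_k,e_l)$ then gives $J$-invariance.

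The missing computation is short. Replacing $e_i$ by $Je_i$ and applying the Bianchi identity gives
\[
\mathcal R(\alpha)(U,V)=\sum_{i,j}\Rm(Je_i,e_j,U,V)h_{ij}=-\sum_{i,j}\bigl(\Rm(e_j,U,Je_i,V)+\Rm(U,Je_i,e_j,V)\bigr)h_{ij}.
\]
Since $\Rm(U,Je_i,\cdot,\cdot)=-\Rm(JU,e_i,\cdot,\cdot)$, the second sum contributes $(\Rm*h)(JU,V)$.

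For the first sum, write $\Rm(e_j,U,\cdot,\cdot)=\Rm(Je_j,JU,\cdot,\cdot)$. Then pass to the frame $\{Je_i\}$ using $h(Je_i,Je_j)=h_{ij}$; this is where $h\in\Sym^2_I$ enters. Finally use antisymmetry in the first pair and $h_{ij}=h_{ji}$. This sum also contributes $(\Rm*h)(JU,V)$.

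Hence $\mathcal R(\alpha)=(2\Rm*h)\circ J$, which is precisely the paper's argument.
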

\begin{proof}
The fact that $\Rm\ast h \in \Sym_I^2$ is straightforward from the K\"ahler symmetries (see \cite[Section 4.6]{KroenckePHD}). By the first Bianchi identity, we have
\begin{equation*}
\begin{split}
\mathcal{R}(\alpha)(U,V)&= \sum_{i,j} \Rm(Je_i,e_j,U,V)h_{ij} = -\sum_{i,j}(\Rm(e_j,U,Je_i,V) +\Rm(U,Je_i,e_j,V))h_{ij}\\
&=2\sum_{i,j}\Rm(JU,e_i,e_j,V)h_{ij}=2\Rm\ast h(JU,V),
\end{split}
\end{equation*}
where we also used that $h$ is $J$-invariant in the last line.
\end{proof}

\begin{prop}\label{prop; invariant formula}
Let \(h\in \Sym_{I}^2\) and \(\alpha=h\circ J \in \Lambda^{1,1}_{\RR}\). Then
\[-L_f h \circ J = \Delta_{H,f}\alpha -\varepsilon\alpha.\]
\end{prop}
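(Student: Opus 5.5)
The plan is a direct computation: expand both sides in terms of the rough Laplacian, the curvature term and $\nabla_X$, and match them using the Weitzenb\"ock formula for $2$-forms together with the soliton equation.

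\textbf{Left-hand side.} By definition $-L_f h = \Delta h + \nabla_X h + 2\Rm\ast h$. Composing with $J$, I use three facts.
\begin{itemize}
\item Since $\nabla J=0$, composition with $J$ commutes with $\nabla_X$ and with the rough Laplacian, as already noted in the text: $\Delta h\circ J=\Delta\alpha$ and $(\nabla_X h)\circ J = \nabla_X\alpha$.
\item By the preceding lemma, $(2\Rm\ast h)\circ J=\mathcal{R}(\alpha)$.
\end{itemize}
This gives
\begin{equation*}
-L_f h\circ J = \Delta\alpha + \mathcal{R}(\alpha) + \nabla_X\alpha .
\end{equation*}

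\textbf{Right-hand side.} I write $\Delta_{H,f}\alpha=\Delta_H\alpha+\lie_X\alpha$ and expand each piece.
\begin{itemize}
\item The Weitzenb\"ock formula gives $\Delta_H\alpha = \Delta\alpha - (\Ric\cdot\alpha+\alpha\cdot\Ric) + \mathcal{R}(\alpha)$.
\item Since $\nabla$ is torsion free, for any $2$-tensor
\begin{equation*}
(\lie_X\alpha)(U,V) = (\nabla_X\alpha)(U,V) + \alpha(\nabla_U X,V) + \alpha(U,\nabla_V X).
\end{equation*}
In index notation this reads $\lie_X\alpha = \nabla_X\alpha + \nabla^2 f\cdot\alpha + \alpha\cdot\nabla^2 f$, using the same left/right multiplication conventions as for $\Ric$.
\item Substituting the soliton equation \eqref{egs-sol-eqn}, $\nabla^2 f = \Ric + \frac{\varepsilon}{2}g$, gives
\begin{equation*}
\lie_X\alpha = \nabla_X\alpha + \Ric\cdot\alpha + \alpha\cdot\Ric + \varepsilon\alpha .
\end{equation*}
\end{itemize}
Adding the two expansions, the Ricci terms cancel exactly and
\begin{equation*}
\Delta_{H,f}\alpha - \varepsilon\alpha = \Delta\alpha + \mathcal{R}(\alpha) + \nabla_X\alpha ,
\end{equation*}
which is the expression obtained for the left-hand side.

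\textbf{Main obstacle.} The only real issue is bookkeeping of conventions. I must check that the ordering in $\Ric\cdot\alpha$ and $\alpha\cdot\Ric$, as defined after the Weitzenb\"ock formula, is the same ordering produced by the two Lie-derivative terms $\alpha(\nabla_U X,\cdot)$ and $\alpha(\cdot,\nabla_V X)$. Otherwise the Ricci terms would appear as a commutator rather than cancelling. I would verify this in a local orthonormal frame, writing $(\lie_X\alpha)_{ij} = X^k\nabla_k\alpha_{ij} + \nabla_i X^k\alpha_{kj} + \nabla_j X^k \alpha_{ik}$ and comparing with $(\Ric\cdot\alpha)_{ij}=\Ric_i^k\alpha_{kj}$ and $(\alpha\cdot\Ric)_{ij}=\alpha_{ik}\Ric^k_j$.

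I would also confirm that the sign of $\mathcal{R}$ in the Weitzenb\"ock formula is consistent with the curvature convention fixed in Section~\ref{sec; prelim}. This is the same $\mathcal{R}$ used in the preceding lemma, so the $\mathcal{R}(\alpha)$ terms match on both sides.

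As a sanity check, one can note that $\Ric$ is $J$-invariant. Hence $\Ric\cdot\alpha$ and $\alpha\cdot\Ric$ are again of type $(1,1)$, so both sides of the identity lie in $\Lambda^{1,1}_\RR$, consistent with $L_f$ preserving $\Sym^2_I$.
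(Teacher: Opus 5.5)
Your proposal is correct and follows essentially the same route as the paper: compose $-L_f h$ with $J$ using $\nabla J=0$ and the lemma $(2\Rm\ast h)\circ J=\mathcal{R}(\alpha)$, then combine the Weitzenb\"ock formula, the torsion-free identity $(\lie_X-\nabla_X)\alpha=\nabla^2 f\cdot\alpha+\alpha\cdot\nabla^2 f$, and the soliton equation $\nabla^2 f=\Ric+\frac{\varepsilon}{2}g$, so that the Ricci terms cancel and leave $-\varepsilon\alpha$. Your index check of the ordering in the Lie derivative terms is exactly the bookkeeping the paper compresses into its remark about torsion-freeness, and it does give cancellation rather than a commutator, since $\nabla X=\nabla^2 f$ is symmetric.
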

\begin{proof}
\begin{align*}
-L_f h \circ J = (\Delta h +2\Rm*h +\nabla_{\nabla f} h)\circ J &= \Delta \alpha +\mathcal{R}(\alpha) +\nabla_{\nabla f}\alpha \\
&= \Delta_H \alpha + (\Ric\cdot \alpha+\alpha\cdot \Ric) +\nabla_{\nabla f} \alpha\\
&= \Delta_H \alpha -\varepsilon\alpha + (\nabla^2 f\cdot \alpha+\alpha\cdot \nabla^2 f) +\nabla_{\nabla f} \alpha\\
&=\Delta_H \alpha - \varepsilon\alpha + \mathcal{L}_{\nabla f}\alpha\\
&=\Delta_{H,f}\alpha -\varepsilon\alpha 
\end{align*}
where the second last equality follows from the identity \((\mathcal{L}_X-\nabla_X)\alpha=(\nabla^2 f)\cdot\alpha+\alpha\cdot\nabla^2 f\), which is a consequence of the torsion-free property of the Levi-Civita connection.

\end{proof} 

\subsection{The $J$-anti-invariant subspace}\label{subsection; J-anti-invariant}

For $h\in \Sym^2_A,$ note that $(h^\sharp)^\ast = h^\sharp$ and $h^\sharp J = -J h^\sharp.$ For each $J$-anti-invariant symmetric 2-tensor, we can then identify an endomorphism of $TM$ by 
\begin{equation*}
h \mapsto -Jh^\sharp =: H.
\end{equation*}
\begin{lemma}
For every $h\in \Sym_A^2,$ the associated endomorphism $H$ satisfies 
\begin{equation}\label{eq; properties of H}
H^\ast = H, \qquad HJ=-JH, \qquad h^\sharp =JH.
\end{equation}
Moreover, $|H| = |h|$ and $|\nabla H| = |\nabla h|.$
\end{lemma}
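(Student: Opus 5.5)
We need to prove: for $h\in\Sym^2_A$ and $H=-Jh^\sharp$, we have $H^\ast=H$, $HJ=-JH$, $h^\sharp=JH$, $|H|=|h|$ and $|\nabla H|=|\nabla h|$. The plan is to use three facts: $J^2=-1$, $J^\ast=-J$ (since $J$ is $g$-orthogonal), and $\nabla J=0$. Each identity then reduces to a short algebraic manipulation.

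\textbf{Step 1 (anticommutation).} Since $h$ is symmetric, $h^\sharp$ is self-adjoint. The condition $h(JU,JV)=-h(U,V)$ reads $g(h^\sharp JU,JV)=-g(h^\sharp U,V)$, i.e.
\[
J^\ast h^\sharp J=-h^\sharp, \qquad\text{so}\qquad -Jh^\sharp J=-h^\sharp .
\]
Multiplying on the left by $J$ and using $J^2=-1$ gives $h^\sharp J=-Jh^\sharp$. Then
\[
HJ=-Jh^\sharp J=Jh^\sharp J\cdot(-1)\cdot(-1)\ldots
\]
more cleanly: $HJ=-Jh^\sharp J=J(Jh^\sharp)=J^2h^\sharp=-h^\sharp$, while $JH=-J^2h^\sharp=h^\sharp$. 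Hence $HJ=-JH$. The same computation gives $JH=h^\sharp$, which is the third identity.

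\textbf{Step 2 (self-adjointness).} We have
\[
H^\ast=-(h^\sharp)^\ast J^\ast=h^\sharp J=-Jh^\sharp=H .
\]

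\textbf{Step 3 (norms).} Use the endomorphism norm $|A|^2=\tr(A^\ast A)$, which agrees with the tensor norm of $h$ under $\sharp$. Then
\[
|H|^2=\tr\big((h^\sharp)^\ast J^\ast J h^\sharp\big)=\tr\big((h^\sharp)^\ast h^\sharp\big)=|h|^2,
\]
because $J^\ast J=1$. For the derivative, $\nabla J=0$ and $\nabla$ commutes with $\sharp$ give $\nabla_U H=-J\nabla_U h^\sharp$ for each $U$. Applying the pointwise norm identity for each $e_i$ and summing over $i$ yields $|\nabla H|=|\nabla h|$.

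There is no real obstacle. The only points needing care are the sign conventions for $J\cdot h$ and for $J^\ast=-J$, and checking that the chosen norm on endomorphisms matches the norm on $2$-tensors, which it does since $\sharp$ is an isometry.
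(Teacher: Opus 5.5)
Your proof is correct and follows the same route as the paper. You obtain $H^\ast=H$, $HJ=-JH$ and $h^\sharp=JH$ by the same short manipulations with $J^2=-1$, $J^\ast=-J$ and $h^\sharp J=-Jh^\sharp$, and the norm equalities from $J$ being a parallel isometry. You additionally derive $h^\sharp J=-Jh^\sharp$ from $J\cdot h=-h$ (the paper only notes it) and write out the trace computation; just delete the abandoned half-line at the start of the $HJ$ computation in Step 1.
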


\begin{proof}
The proof is straightforward from the fact that $h$ is anti-invariant with respect to $J.$ First, we have 
\[
H^\ast =(-Jh^\sharp)^\ast = h^\sharp J=-Jh^\sharp = H
\]
 which shows the first equality in \eqref{eq; properties of H}. We also have $HJ= -Jh^\sharp J=-h^\sharp$ and $JH= J(-Jh^\sharp)= h^\sharp,$ which proves the other two equalities. Equality of the norms follows trivially from $J$ being parallel and an isometry.
\end{proof}

Complexifying $H$ to $T_{\mathbb{C}}M= TM\otimes \mathbb{C},$ since $HJ=-JH,$ we can view it as a $T^{1,0}M$-valued $(0,1)$-form. This identification allows us to consider $\Delta_C H,$ where $\Delta_C$ is the \textit{complex Laplacian} defined by 
\[
\Delta_C := 2\Box_{\bar \partial}.
\]
Note that the complex Laplacian is, therefore, a nonnegative operator \cite[p. 332]{Demailly}. The following proposition allows us to relate $\Delta_C$ to the Lichnerowicz Laplacian restricted to $\Sym^2_A.$ We refer to \cite[p. 55]{KroenckePHD}, \cite[Lemma 3.3]{HallMurphy} and \cite[Section 3]{DWW} for similar discussions.

\begin{prop}\label{prop; koiso}
Let $(M,g,J)$ be a K\"ahler manifold and let $h\in \Sym^2_A,$ with $H= -J h^\sharp.$ Then
\begin{equation}\label{eq; gen koiso}
\Delta_C H= J \left( (\Delta + 2\Rm\ast)h\right)^\sharp +  [ H, \Ric^\sharp ].
\end{equation}
In particular, 
\begin{equation}\label{eq; good koiso}
\langle \Delta_C H,H\rangle = \langle -(\Delta +2\Rm\ast)h,h\rangle.
\end{equation}
\end{prop}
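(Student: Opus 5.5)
We would prove \eqref{eq; gen koiso} by a Weitzenb\"ock computation for $\Box_{\bar\partial}$ acting on $T^{1,0}M$-valued $(0,1)$-forms, and then deduce \eqref{eq; good koiso} by pairing with $H$.

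\textbf{Step 1: Weitzenb\"ock formula for $\Delta_C$.} Regard $H$ as a section of $\Lambda^{0,1}\otimes T^{1,0}M$. The Chern connection on $T^{1,0}M$ coincides with the Levi-Civita connection, since $\nabla J=0$. The Bochner--Kodaira--Nakano identity (see \cite[Chapter VII]{Demailly}) then gives
\begin{equation*}
2\Box_{\bar\partial} = \nabla^\ast\nabla + \mathcal{K},
\end{equation*}
where $\mathcal{K}$ is a zeroth order curvature term. It has two contributions: a Ricci term coming from the form part $\Lambda^{0,1}$, and a full curvature term coming from the coupling of the form index with the bundle index, through $\Rm$ acting on $T^{1,0}M$. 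Concretely, we would write
\begin{equation*}
\bar\partial H = \mathrm{Alt}(\nabla^{0,1}H), \qquad \bar\partial^\ast H = -\sum_i \nabla_{\bar e_i}H(\bar e_i)
\end{equation*}
in a local unitary frame. Commuting the covariant derivatives then produces
\begin{equation*}
\Delta_C H = -\Delta H + H\circ\Ric^\sharp - \bigl(\text{a term } \textstyle\sum_{i,j} R(e_i,\cdot)H(e_i)\text{-type}\bigr).
\end{equation*}

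\textbf{Step 2: translate back to $h$.} Since $J$ is parallel, $-\Delta H = J(-\Delta h)^\sharp$. The curvature contraction, after using the K\"ahler symmetries $\Rm(JU,JV,\cdot,\cdot)=\Rm(U,V,\cdot,\cdot)$, the first Bianchi identity and $h^\sharp J=-Jh^\sharp$, should become $-2J(\Rm\ast h)^\sharp$ together with a Ricci piece $-\Ric^\sharp H$. This is analogous to the computation in the $J$-invariant lemma above, where $\mathcal{R}(\alpha)=2(\Rm\ast h)\circ J$. Collecting the two Ricci terms gives $[H,\Ric^\sharp]$, with signs fixed by $\Delta=-\nabla^\ast\nabla$. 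We would cross-check the constants against \cite[Lemma 3.3]{HallMurphy} and \cite[Section 3]{DWW}.

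\textbf{Step 3: the pairing identity.} Pair \eqref{eq; gen koiso} with $H$. The commutator term drops out: $\langle [H,\Ric^\sharp],H\rangle = \tr(H\Ric^\sharp H - \Ric^\sharp H H)=0$ by cyclicity of the trace, using that $H$ and $\Ric^\sharp$ are self-adjoint. For the remaining term, $J$ is an isometry and $J k^\sharp$ corresponds to $-J(-Jk^\sharp)$. This gives
\begin{equation*}
\langle J k^\sharp, H\rangle = \langle Jk^\sharp,-Jh^\sharp\rangle = -\langle k,h\rangle, \qquad k=(\Delta+2\Rm\ast)h.
\end{equation*}
Here $k\in\Sym^2_A$ because the decomposition is preserved, and this yields exactly \eqref{eq; good koiso}.

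\textbf{Main obstacle.} The main difficulty is Step 2: getting the curvature term to become exactly $2\Rm\ast h$ plus a commutator. This requires careful bookkeeping of the type decomposition and of the factor $2$ in $\Delta_C=2\Box_{\bar\partial}$, together with the normalization of the norm on $(0,1)$-forms. To control these constants, we would first verify the formula on flat $\mathbb{C}^m$, where it reduces to $\Delta_C H=-\Delta H$. We would then check a K\"ahler--Einstein example, where the commutator vanishes and the result should agree with \cite{DWW}.
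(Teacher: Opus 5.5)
Your plan follows the paper's proof: a commutation computation for $\Box_{\bar\partial}$ on $T^{1,0}M$-valued $(0,1)$-forms in a normal unitary frame, translation back to $h$ via $\nabla J=0$ and $H=-Jh^\sharp$, and a pairing in which the commutator drops out. Your Step 3 is correct.

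The gap is Step 2, which carries all the content of \eqref{eq; gen koiso}. You state its outcome (``should become'') instead of deriving it, and the signs you do write are off in two places.
\begin{enumerate}[(i)]
\item $\nabla J=0$ and $H=-Jh^\sharp$ give $-\Delta H=J(\Delta h)^\sharp$, not $J(-\Delta h)^\sharp$. Your version contradicts your own flat check, since on $\mathbb{C}^m$ the target reads $\Delta_C H=J(\Delta h)^\sharp=-\Delta H$.
\item Read literally, subtracting a curvature term equal to $-2J(\Rm\ast h)^\sharp-\Ric^\sharp H$ from $-\Delta H+H\Ric^\sharp$ produces the anticommutator $H\Ric^\sharp+\Ric^\sharp H$, not $[H,\Ric^\sharp]$.
\end{enumerate}
The second point is not cosmetic. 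In general $\langle H\Ric^\sharp+\Ric^\sharp H,H\rangle=2\tr(\Ric^\sharp H^2)\neq 0$, so \eqref{eq; good koiso} depends exactly on the relative sign of the two Ricci contributions. That sign has to come out of the computation, not from comparison with the literature.

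The paper supplies the computation as follows. It uses $(\bar\partial H)^k_{\bar i\bar j}=\nabla_{\bar i}H^k_{\bar j}-\nabla_{\bar j}H^k_{\bar i}$, $(\bar\partial^\ast H)^k=-\nabla_iH^k_{\bar i}$ and $\nabla^\ast\nabla=-2\nabla_i\nabla_{\bar i}+[\nabla_i,\nabla_{\bar i}]$, which give
\[
\Delta_C H=\nabla^\ast\nabla H-[\nabla_i,\nabla_{\bar i}]H+2[\nabla_i,\nabla_{\bar j}]H_{\bar i}.
\]
Next it uses $([\nabla_i,\nabla_{\bar j}]H)_{\bar l\bar k}=R_{i\bar jp\bar l}H_{\bar p\bar k}+R_{i\bar jp\bar k}H_{\bar l\bar p}$ together with the K\"ahler identities $R_{i\bar ip\bar j}=R_{i\bar jp\bar i}=\Ric_{p\bar j}$. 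With these:
\begin{enumerate}[(i)]
\item the trace commutator contributes $-(H\Ric^\sharp+\Ric^\sharp H)$;
\item the mixed commutator contributes $2H\Ric^\sharp$ through the contracted form slot;
\item the mixed commutator also contributes $2R_{i\bar jp\bar k}H_{\bar i\bar p}$ through the bundle slot.
\end{enumerate}
The Ricci terms therefore net to $[H,\Ric^\sharp]$, which is \eqref{eq; koiso1}. Finally, $H_{\bar j\bar k}=-ih_{\bar j\bar k}$ and $(\Rm\ast h)_{\bar j\bar k}=-R_{i\bar jp\bar k}h_{\bar i\bar p}$ identify $2R_{i\bar jp\bar k}H_{\bar i\bar p}$ with the $(\bar j,\bar k)$-component of $J(2\Rm\ast h)^\sharp$. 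Every term is a real endomorphism anticommuting with $J$, so these components determine the identity.

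Your heuristic split of where the pieces come from is correct: net $H\Ric^\sharp$ from the form slot, and the $2\Rm$-term plus $-\Ric^\sharp H$ from the bundle slot. What is missing is this computation, which fixes the signs.
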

\begin{proof}
In a normal unitary frame at a fixed point on $M,$ we have $(\bar \partial H)_{\bar i \bar j}^k = \nabla_{\bar i}H^k_{\bar j} - \nabla_{\bar j}H^k_{\bar i}$ and $(\bar{\partial}^{\ast}H)^k=-\nabla_i H_{\bar i}^k,$ therefore
\begin{equation*}
(\Box_{\bar \partial}H)_{\bar j}^k = -\nabla_i\nabla_{\bar i}H^k_{\bar j} + [\nabla_i,\nabla_{\bar j}]H_{\bar i}^k,
\end{equation*}
and
\begin{equation*}
-2\nabla_i\nabla_{\bar i}H = \nabla^{\ast}\nabla H -[\nabla_i,\nabla_{\bar i}]H.
\end{equation*}
For the commutator, we have $\left( [ \nabla_i, \nabla_{\bar j}] H\right)_{\bar l \bar k} = R_{i\bar j p \bar l}H_{\bar p \bar k} + R_{i\bar j p \bar k}H_{\bar l \bar p}.$ Using the K\"ahler symmetries and Ricci identities we get
\begin{equation}\label{eq; koiso1}
(\Delta_C H)_{\bar j\bar k}= (\nabla^{\ast}\nabla H)_{\bar j \bar k} +2R_{i \bar j p \bar k}H_{\bar i \bar p} + \Ric_{p\bar j}H_{\bar p \bar k} - \Ric_{p \bar k}H_{\bar j \bar p}.
\end{equation}

For the first term above, we note that, since $J$ is parallel, 
\begin{equation}\label{eq; koiso2}
\nabla^{\ast}\nabla H = -J(\nabla^{\ast}\nabla h)^\sharp = J(\Delta h)^\sharp.
\end{equation}
For the second term, we recall that, by definition, $(\Rm\ast h)_{\bar j \bar k}= -R_{i \bar j p \bar k}h_{\bar i \bar p}.$  Moreover, since $J= i$ on $T^{1,0}M$, we have $H_{\bar j \bar k}= -i h_{\bar j \bar k}.$ Thus,
\begin{equation}\label{eq; koiso3}
\begin{split}
g\left( J(2\Rm\ast h)^\sharp Z_{\bar j}, Z_{\bar k}\right) &= 2i (\Rm \ast h)_{\bar j \bar k} = -2i R_{i\bar j p \bar k}h_{\bar i \bar p} \\
& = 2R_{i \bar j p \bar k}H_{\bar i \bar p}.
\end{split}
\end{equation}
An analogous computation for $[H,\Ric^\sharp]$ deals with the Ricci term, and this together with \eqref{eq; koiso1}, \eqref{eq; koiso2} and \eqref{eq; koiso3} proves equation \eqref{eq; gen koiso}.

Finally, since $H$ and $\Ric^\sharp$ are both self-adjoint, their commutator is skew-adjoint and orthogonal to $H,$ which yields \eqref{eq; good koiso}.
\end{proof}

When $(M,g,J)$ is a gradient K\"ahler-Ricci soliton, equations \eqref{eq; gen koiso} and \eqref{eq; good koiso} further simplify as follows.
\begin{cor}\label{cor; koiso on solitons}
Let $(M,g,X)$ be a gradient K\"ahler-Ricci soliton. Then, with the notation introduced above, the following holds for every $h\in \Sym_A^2.$
\begin{equation}\label{eq; koiso on solitons}
-J(L_f h)^\sharp = \Delta_C H - \lie_X H.
\end{equation}
In particular, 
\begin{equation}\label{eq; good pairing}
\langle L_f h, h \rangle = \langle \Delta_C H, H\rangle - \frac 12 X( |H|^2).
\end{equation}
\end{cor}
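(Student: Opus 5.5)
We start from Proposition \ref{prop; koiso} and rewrite $L_f$ in terms of the unweighted operator. By definition,
\[
L_f h = -(\Delta + 2\Rm\ast)h - \nabla_X h .
\]
Apply $-J(\cdot)^\sharp$ to both sides. From \eqref{eq; gen koiso} we have
\[
J\big((\Delta+2\Rm\ast)h\big)^\sharp = \Delta_C H - [H,\Ric^\sharp].
\]
Since $J$ is parallel, $-J(-\nabla_X h)^\sharp = J(\nabla_X h)^\sharp = -\nabla_X H$, because $H = -Jh^\sharp$. Combining these gives
\[
-J(L_f h)^\sharp = \Delta_C H - [H,\Ric^\sharp] - \nabla_X H .
\]

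Next we identify $\nabla_X H + [H,\Ric^\sharp]$ with $\lie_X H$, using the soliton equation. For an endomorphism, torsion-freeness gives
\[
\lie_X H = \nabla_X H - [\nabla X, H],
\]
where $\nabla X = (\nabla^2 f)^\sharp = \Ric^\sharp + \tfrac{\varepsilon}{2}\mathrm{Id}$ by \eqref{egs-sol-eqn}. The identity term commutes with $H$, so
\[
-[\nabla X, H] = -[\Ric^\sharp, H] = [H,\Ric^\sharp].
\]
Hence $\lie_X H = \nabla_X H + [H,\Ric^\sharp]$. Substituting this into the previous display gives \eqref{eq; koiso on solitons}. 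The only delicate point is sign bookkeeping: we must check the convention $(\lie_X H)(U) = [X,HU] - H[X,U]$, which yields the formula above, and confirm that $[H,\Ric^\sharp]$ enters with the sign stated in \eqref{eq; gen koiso}.

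For \eqref{eq; good pairing}, take the inner product with $H$. Since $J$ is an isometry, $\langle L_f h,h\rangle = \langle -J(L_fh)^\sharp, -Jh^\sharp\rangle = \langle -J(L_f h)^\sharp, H\rangle$. Then
\[
\langle \lie_X H, H\rangle = \langle \nabla_X H, H\rangle + \langle [H,\Ric^\sharp], H\rangle .
\]
The commutator term vanishes: $[H,\Ric^\sharp]$ is skew-adjoint while $H$ is self-adjoint, exactly as at the end of the proof of Proposition \ref{prop; koiso}. The remaining term is $\langle\nabla_X H,H\rangle = \tfrac12 X(|H|^2)$. This gives \eqref{eq; good pairing}.

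The main obstacle is purely conventional. We need to verify that the norm and inner product on endomorphisms match those on $\Sym^2$ under $h\mapsto H$, which the preceding lemma provides via $|H|=|h|$. We also need to make sure the Lie derivative sign convention is consistent throughout.
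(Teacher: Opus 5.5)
Your proposal is correct and follows the paper's proof essentially step for step. You apply $-J(\cdot)^\sharp$ to $L_f h$ and use Proposition~\ref{prop; koiso} together with $\nabla J=0$. You then absorb the commutator via $\lie_X H=\nabla_X H+[H,\nabla X]$ with $\nabla X=\Ric^\sharp+\frac{\varepsilon}{2}\mathrm{Id}$, and kill the commutator in the pairing by the orthogonality of self-adjoint and skew-adjoint endomorphisms. Your sign check of $(\lie_X H)(U)=[X,HU]-H[X,U]$ does give $\lie_X H=\nabla_X H-(\nabla X)H+H(\nabla X)$, which confirms the formula you use.
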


\begin{proof}
Recalling that $L_f = -\Delta_f -2\Rm\ast = -\Delta -\nabla_{\nabla f} - 2\Rm\ast,$ Proposition \ref{prop; koiso} gives us
\begin{equation*}
\begin{split}
-J(L_f h)^\sharp &= J((\Delta + 2\Rm\ast)h)^\sharp + J(\nabla_{\nabla f}h)^\sharp \\
&= \Delta_C H -[H,\Ric^\sharp] - \nabla_X H.
\end{split}
\end{equation*}
The Lie derivative of the endomorphism $H$ is given by $\lie_X H = \nabla_X H + [H, \nabla X]$, while the soliton equation can be written as $\nabla X = \Ric^\sharp + \frac{\varepsilon}{2}\operatorname{Id}.$ Therefore, we have
\begin{equation*}
\begin{split}
-J(L_f h)^\sharp &= \Delta_C H -[H,\Ric^\sharp] - \nabla_X H \\
&= \Delta_C H - [H,\Ric^\sharp] - \lie_X H + [H,\Ric^\sharp] \\
&= \Delta_C H- \lie_X H.
\end{split}
\end{equation*}

For \eqref{eq; good pairing}, we recall that, since both $H$ and $\nabla X$ are self-adjoint, 
\begin{equation*}
\langle \lie_X H, H \rangle = \langle \nabla_X H, H \rangle + \langle [H, \nabla X ], H\rangle = \langle \nabla_X H, H \rangle.
\end{equation*}
Therefore, 
\begin{equation*}
\begin{split}
\langle L_f h, h \rangle &= \langle - J(L_f h)^\sharp, H \rangle = \langle \Delta_C H - \lie_X H, H\rangle \\
& = \langle  \Delta_C H, H\rangle - \langle \nabla_X H,H\rangle \\
&=  \langle \Delta_C H, H\rangle - \frac 12 X( |H|^2),
\end{split}
\end{equation*}
where, in the first equality, we used the fact that multiplying by $-J$ is an isometry.

\end{proof}

\section{Linear stability}\label{sec; stability}

\subsection{Conjugation of the Lichnerowicz operator and an unweighted estimate}

We recall the following conjugation identity for the operator $L_f$ \cite{DeruelleStability}, which shows that the operator $L_f$ is conjugate to a Schr\"odinger operator $-\Delta + V.$ This is crucially explored by Deruelle to study the spectrum of $L_f,$ as well as in the weighted estimates of \cite[Section 5]{BamlerChen}. Given it will be an important piece in the proof of our main theorem, we briefly sketch its proof. 

\begin{lemma}\label{lem; conjugation}
For any smooth $h\in \Sym^2,$ we have
\begin{equation}
\label{eq; conjugation}\left(e^{f/2} L_f e^{-f/2}\right) h = -(\Delta +2\Rm\ast)h + \frac 12\left( \Delta f + \frac 12 |\nabla f|^2 \right)h.
\end{equation}
\end{lemma}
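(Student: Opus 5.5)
The identity is a direct computation: conjugating by $e^{-f/2}$ is multiplication by a scalar function, so it commutes with the zeroth-order term $\Rm\ast$. Only the second-order part $-\Delta_f = -\Delta - \nabla_{\nabla f}$ produces extra terms, and these come from the Leibniz rule. I would set $u := e^{-f/2}$ and compute $\Delta(uh)$ and $\nabla_{\nabla f}(uh)$ in terms of $h$, then multiply by $u^{-1}=e^{f/2}$.

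First, I would record the derivatives of $u$:
\[
\nabla u = -\tfrac12 u\,\nabla f, \qquad \Delta u = u\left(-\tfrac12 \Delta f + \tfrac14 |\nabla f|^2\right).
\]
Next, the product rule for the rough Laplacian on tensors, $\Delta(uh) = (\Delta u)h + 2\nabla_{\nabla u}h + u\Delta h$, gives
\[
\Delta(uh) = u\left[\Delta h - \nabla_{\nabla f}h + \left(-\tfrac12\Delta f + \tfrac14|\nabla f|^2\right)h\right].
\]
Similarly,
\[
\nabla_{\nabla f}(uh) = u\left[\nabla_{\nabla f}h - \tfrac12|\nabla f|^2 h\right].
\]
Adding the two expressions, the first-order terms $\pm\nabla_{\nabla f}h$ cancel. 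This cancellation is the point of the weight $e^{-f/2}$. What remains is
\[
\Delta_f(uh) = u\left[\Delta h - \left(\tfrac12\Delta f + \tfrac14|\nabla f|^2\right)h\right].
\]

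Finally, I would combine this with $-2\Rm\ast(uh) = -2u\,\Rm\ast h$ and multiply by $e^{f/2}$. This gives
\[
e^{f/2}L_f(e^{-f/2}h) = -(\Delta + 2\Rm\ast)h + \tfrac12\left(\Delta f + \tfrac12|\nabla f|^2\right)h,
\]
which is \eqref{eq; conjugation}.

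There is no real obstacle; the computation is purely local and uses neither the soliton equation nor the Kähler condition. The only care needed is with the cross term $2\nabla_{\nabla u}h$ and with signs under the convention $\Delta = \tr_g\nabla^2$.
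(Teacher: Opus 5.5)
Your computation is correct and is essentially the paper's proof: both expand $e^{f/2}L_f(e^{-f/2}h)$ by the Leibniz rule, observe that the cross term $-2e^{f/2}\langle\nabla(e^{-f/2}),\nabla h\rangle=\nabla_{\nabla f}h$ cancels the first-order drift term, and identify the remaining zeroth-order coefficient as $-e^{f/2}\Delta_f(e^{-f/2})=\frac12\Delta f+\frac14|\nabla f|^2$. Isolating $\Delta_f(uh)$ first with $u=e^{-f/2}$ is only a slightly tidier bookkeeping of the same calculation.
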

\begin{proof}
Fixing $h \in C^{\infty}(M,\Sym^2 T^{\ast}M),$ a direct calculation gives
\begin{equation*}
\begin{split}
\left(e^{f/2} L_f e^{-f/2}\right)h & = e^{f/2} (-\Delta_f -2\Rm\ast)\left(e^{-f/2}h\right)\\
& = e^{f/2} \big [ -e^{-f/2}( \Delta h +2\Rm\ast h) - \nabla_{\nabla f} (e^{-f/2}h) - 2\langle \nabla(e^{-f/2}),\nabla h\rangle\\
& \qquad \quad -\Delta(e^{-f/2})h\big]\\
& = -(\Delta h + 2\Rm\ast h) - e^{f/2}\Delta_f(e^{-f/2})h-\nabla_{\nabla_f} h - 2e^{f/2}\langle \nabla(e^{-f/2}),\nabla h\rangle \\
& =  -(\Delta  + 2\Rm\ast )h + e^{f/2} h\bigg [ \frac 12 \left(\Delta f e^{-f/2} -\langle \nabla f, \frac 12 e^{-f/2} \nabla f\rangle \right)\\
&\quad  +\langle \nabla f, \frac 12 e^{-f/2} \nabla f\rangle \bigg] \\
&= -(\Delta  + 2\Rm\ast )h + \frac 12\left(\Delta f+ \frac 12 |\nabla f|^2\right)h.
\end{split}
\end{equation*}
\end{proof}

Therefore, Lemma \ref{lem; conjugation} relates the weighted operator $L_f$ with the unweighted Einstein operator $-(\Delta + 2\Rm\ast).$ Crucially, we have that this is a nonnegative operator when restricted to $J$-anti-invariant symmetric 2-tensors. In fact, recalling that for $h_A \in C_c^{\infty}(M, \Sym_A^2 T^{\ast}M),$ if $H_A = -J h_A^\sharp,$ then
\begin{equation*}
\langle \Delta_C H_A, H_A\rangle = \langle -(\Delta + 2\Rm\ast)h_A,h_A\rangle,
\end{equation*}
which yields, after integrating by parts, 
\begin{equation*}
\begin{split}
\int_M \left( |\nabla h_A|^2 -  \langle 2\Rm\ast h_A,h_A\rangle \right)d\mu_g &= \int_M \langle \Delta_C H_A,H_A\rangle d\mu_g\\
& = 4\left( \| \bar \partial H_A \|^2_{L^2} +  \| \bar \partial^\ast H_A \|^2_{L^2} \right) \geq 0,
\end{split}
\end{equation*}
where the norms in the last line are with respect to the standard Hermitian metric \cite[Chapter VI]{Demailly}. From the definition of stability in Definition \ref{def; linear stability}, the above gives $Q_0(h_A)\geq 0$ for every $h_A\in C_c^{\infty}(M, \Sym_A^2 T^{\ast}M)$.  

\subsection{The expanding case}

In this subsection, we combine the results from Section \ref{sec; action on sym tensors} and Lemma \ref{lem; conjugation} to prove Theorem \ref{thm; linear stability} in the expanding case. Let $h\in C_c^{\infty}(M, \Sym^2 T^{\ast}M).$ Then, we can write $h:= h_I + h_A,$ where $h_I\in \Sym_I^2$ and $h_A\in \Sym_A^2.$ From the discussion in Subsection \ref{subsection; J-invariant}  we have

\begin{equation*}
Q_f(h_I)=\int_M \langle L_f h_I, h_I\rangle d\mu_f = 2\int_M \langle L_f h_I\circ J, h_I\circ J \rangle d\mu_f.
\end{equation*}
Writing  $\alpha=h_I \circ J,$ Proposition \ref{prop; invariant formula} then yields
\begin{equation*}
\begin{split}
Q_f(h_I)
&=2\int_M \langle(-\Delta_{H,f}+1)\alpha,\alpha\rangle d\mu_f\\
&=2\int_M\langle -\Delta_{H,f}\alpha, \alpha \rangle d\mu_f +2\|\alpha\|_{L_f^2}^2\\
&=2\left(\|d\alpha\|^2_{L_f^2}+ \|\delta_f\alpha\|^2_{L_f^2}+ \|\alpha\|^2_{L_f^2}\right)
\geq \|h_I\|_{L_f^2}^2,
\end{split}
\end{equation*}
where we use the convention $|h_I|^2=2|\alpha|^2$ and integration by parts to deal with the weighted Hodge Laplacian term. This deals with $J$-invariant tensors.

In order to apply the conjugation identity and prove our main result, we define $k:= e^{f/2}h_A.$ Notice that we still have $k\in  C_c^{\infty}(M, \Sym_A^2 T^{\ast}M)$. Furthermore,

\begin{equation*}
\begin{split}
Q_f(h_A)
&=\int_M\langle L_fh_A,h_A\rangle e^f d\mu_g =\int_M\langle L_fe^{-f/2}k,e^{-f/2}k\rangle e^f d\mu_g\\
&=\int_M \langle (e^{f/2} L_f e^{-f/2}) k, k \rangle d\mu_g\\
&=\int_M \langle -(\Delta+2\Rm\ast)k, k \rangle d\mu_g
  +\frac12 \int_M \left( \Delta f+\frac12 |\nabla f|^2 \right) |k|^2 d\mu_g\\
&=Q_0(k)+\frac12 \int_M \left( \Delta f+ \frac12 |\nabla f|^2 \right) |k|^2 d\mu_g\\
&\geq \frac12 \int_M \left(R+m+ \frac12 |\nabla f|^2 \right) e^f |h_A|^2 d\mu_g\\
&\geq \frac12 \inf_M(R+m)\|h_A\|_{L_f^2}^2 \geq 0,
\end{split}
\end{equation*}
where we used that $R\geq -\frac n2=-m$ on an expanding gradient Ricci soliton \cite[Theorem 3]{PRS}. Since $L_f h = L_f h_I + L_f h_A,$ we finally get 
\begin{equation*}
\begin{split}
\int_M \langle L_f h,h\rangle d\mu_f &= \int_M \langle L_f h_I,h_I\rangle d\mu_f + \int_M \langle L_f h_A,h_A\rangle d\mu_f \\
& \geq \|h_I\|^2_{L_f^2} + \frac12\inf_M(R+m) \|h_A\|^2_{L_f^2} \\
&\geq \min\left\{1,\frac12\inf_M(R+m)\right\}\|h\|_{L^2_f}^2.
\end{split}
\end{equation*}

\subsection{The steady case}

Finishing the proof of Theorem \ref{thm; linear stability}, we now look at steady solitons. In this case, the identities in Lemma \ref{id-SGE} reduce to the following.
\begin{equation*}
\begin{split}
&\Ric = \nabla^2 f,\\
&\Delta f = R \geq 0,\\
&R + |\nabla f|^2 = C \geq0,
\end{split}
\end{equation*}
where $R\geq 0$ follows from \cite{ZhangSolitons}.

Writing $h = h_I + h_A$ as in the expanding case, we have $(L_f h_I)(J\cdot, \cdot) = -\Delta_{H,f} \alpha,$ so an analogous computation yields
\begin{equation*}
Q_f(h_I)= 2( \| d\alpha\|^2_{L_f^2} + \|\delta_f \alpha\|^2_{L_f^2}) \geq 0.
\end{equation*}
For $h_A \in C_c^{\infty}(M, \Sym_A^2 T^{\ast}M)$ we use the same conjugation formula, together with $k=e^{f/2}h_A$ and the soliton identities, to write
\begin{equation*}
\begin{split}
Q_f(h_A)& = \int_M \langle \Delta_C(e^{f/2}H_A), e^{f/2}H_A \rangle d\mu_g + \frac 12 \int_M \langle (\Delta f + \frac 12 |\nabla f|^2) k, k \rangle d\mu_g\\
&\geq \frac 12 \int_M (R+ \frac 12 |\nabla f|^2)|k|^2 d\mu_g = \frac 14 \int_M(R+C)|h_A|^2e^fd\mu_g \\
& \geq \frac C4 \|h_A\|^2_{L^2_f}.
\end{split}
\end{equation*}
Putting the two estimates together finally gives
\begin{equation*}
Q_f(h)= Q_f(h_I) + Q_f(h_A) \geq  \frac C4 \|h_A\|^2_{L^2_f} \geq 0.
\end{equation*}
\begin{rk}
Note that we do not immediately get strict stability from the argument above, as we can only show that $Q_f$ restricted to the $J$-invariant subspace is nonnegative. 
\end{rk}

\subsection{Strict stability of expanding K\"ahler-Ricci solitons}\label{subsection; strict stability}

We now look at the question of strict stability in the expanding case. For convenience, we state Theorem \ref{thm; strict stability} again.

\begin{thm}
Let $(M^{2m},g,\nabla f)$ be a non-compact, complete expanding gradient K\"ahler-Ricci soliton satisfying $(\mathscr{E})$. Then it is strictly linearly stable, with
\begin{equation}
\inf_{h\in H^1_f\setminus\{0\}} \frac{Q_f(h)}{\|h\|^2_{L^2_f}} =: \lambda_1(L_f)\geq  \min \{1, \frac 12 \inf_M (R+m)\}>0.
\end{equation}
In particular, if condition \eqref{condition; R nonneg} holds, i.e. $R\geq 0,$ and $m\geq 2,$ then $\lambda_1 \geq 1.$
\end{thm}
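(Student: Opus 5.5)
The expanding-case computation above already gives
\[
Q_f(h)\geq \min\{1,\tfrac12\inf_M(R+m)\}\,\|h\|^2_{L^2_f}
\]
for every $h\in C_c^\infty(M,\Sym^2T^\ast M)$. So two things remain: show that $\inf_M(R+m)>0$ under $(\mathscr{E})$, and extend the inequality from compactly supported tensors to all of $H^1_f$. The final claim then follows at once: if $R\geq 0$ and $m\geq 2$, then $\tfrac12\inf_M(R+m)\geq m/2\geq 1$, so $c_S=1$.

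\textbf{Step 1: positivity of $R+m$.} Case \eqref{condition; R nonneg} is trivial, since $R+m\geq m$. In case \eqref{condition; R to 0}, $R+m\to m$ at infinity. By the strong maximum principle applied to $\Delta_f R=-R-2|\Ric|^2$, either $R>-m$ everywhere or $R\equiv -m$. Indeed, if $R+m$ attained $0$, then $R$ would attain its minimum $-m$, and there $\Delta_f R\geq 0$ while $-R-2|\Ric|^2\leq m-2\cdot\frac{R^2}{2m}=0$. This forces $\Ric=-\tfrac12 g$ near that point, and by analyticity $R\equiv -m$, which contradicts $R\to 0$. Hence $R+m$ is positive and tends to $m$, so it is bounded below by a positive constant.

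Case \eqref{condition; f is proper} is the main obstacle. Here I would use $|\nabla f|^2+R=f$ (with $C_0=0$) together with properness: $f\to+\infty$ at infinity. The estimate in the expanding case actually keeps the term $\tfrac12\int(R+m+\tfrac12|\nabla f|^2)e^f|h_A|^2$. Writing $R+\tfrac12|\nabla f|^2=\tfrac12(R+f)$, which is $\geq \tfrac12(f-m)$ because $R\geq -m$, the weight tends to infinity outside a compact set $K$. On $K$ itself, the strong maximum principle argument above shows $R+m>0$ unless $R\equiv-m$. The case $R\equiv -m$ is excluded because Lemma \ref{id-SGE} gives $\Delta f=R+m\equiv 0$ and $\Ric=-\tfrac12 g$, hence $\nabla^2f=0$; then $f$ has parallel gradient and cannot be proper, since it is constant along the level directions. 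So $\inf_K(R+m)>0$. To land exactly on $c_S$, note that once $R+m>0$ everywhere, the quantity $R+m$ is bounded below off a compact set by $f\to\infty$ together with $R\geq -m$, and on the compact set by its positive minimum; thus $\inf_M(R+m)>0$. I expect the delicate point to be making the bounded curvature assumption enter cleanly here, for instance through $|R|\leq C$ ensuring the infimum over the noncompact part is controlled.

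\textbf{Step 2: density.} Given $h\in H^1_f$, I would take cutoffs $\phi_r$ with $|\nabla\phi_r|\leq 1/r$ and mollify, so that $\phi_r h\to h$ in $H^1_f$. The bounded curvature assumption gives $|\langle 2\Rm\ast h,h\rangle|\leq C|h|^2$, so $Q_f$ is continuous on $H^1_f$. Hence the inequality passes to the limit, and taking the infimum yields $\lambda_1(L_f)\geq c_S>0$.
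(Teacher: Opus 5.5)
There is a genuine gap in case (i), at exactly the point you flagged. Your handling of cases (ii) and (iii), the density step and the final claim $c_S=1$ matches the paper.

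The sentence ``$R+m$ is bounded below off a compact set by $f\to\infty$ together with $R\geq -m$'' does not follow. By $|\nabla f|^2+R=f$, properness of $f$ only says that $R+|\nabla f|^2\to\infty$. This can happen entirely through $|\nabla f|^2$ while $R+m\to 0$ along a sequence going to infinity. Bounded curvature does not help, since $|R|\le C$ gives no lower bound on $R+m$ away from $0$. Your first idea, keeping the $|\nabla f|^2$ term in the weight, does give strict stability in case (i): once $R+m>0$ everywhere, $R+m+\frac12|\nabla f|^2\ge \frac12(f+m)\ge \frac12\min_M(f+m)>0$. But that yields a different constant. It does not prove the stated claim $\min\{1,\frac12\inf_M(R+m)\}>0$.

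The missing idea is a perturbed maximum principle that rules out $\inf_M u=0$ for $u=R+m$. The paper proceeds as follows.
\begin{itemize}
\item Since $f+m=u+|\nabla f|^2>0$, properness gives $\min_M(f+m)>0$.
\item The soliton identities give $\Delta_f u=u-2|\nabla^2 f|^2$ and $\Delta_f(f+m)=f+m$.
\item Take a minimum point $x_\delta$ of $u+\delta(f+m)$, which exists by properness. The second-order condition gives $2|\nabla^2 f|^2(x_\delta)\le u(x_\delta)+\delta(f(x_\delta)+m)\to 0$ as $\delta\to 0$.
\item The first-order condition $\nabla u=-\delta\nabla f$, combined with $\nabla R=-2\Ric(\nabla f)=-2\nabla^2 f(\nabla f)+\nabla f$, gives $\nabla^2 f(\nabla f)=\frac{1+\delta}{2}\nabla f$ at $x_\delta$. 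This is incompatible with $|\nabla^2 f|(x_\delta)\to 0$ unless $\nabla f(x_\delta)=0$.
\item Then $f(x_\delta)+m=u(x_\delta)\to 0$, contradicting $\min_M(f+m)>0$.
\end{itemize}

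A smaller point concerns excluding $R\equiv -m$ under (i). ``Constant along the level directions'' is not enough, since a linear function on $N\times\mathbb{R}$ with $N$ compact is proper. The right reason is that $\nabla^2 f=0$ together with $|\nabla f|^2=f+m$ forces $\nabla f=0$. So $f$ is constant, hence not proper on a non-compact $M$.
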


Before we prove the theorem, which will be a direct consequence of the calculations in the previous sections, we state the following technical proposition, which shows that the domain of the quadratic form 
\[
Q_f(h):= \int_M \left( |\nabla h|^2 - 2\langle \Rm\ast h,h\rangle\right)d\mu_f
\]
can be, in fact, extended to $H_f^1$ when the soliton is complete and has bounded curvature. Initially, we note that our Definition \ref{def; linear stability} only considers smooth symmetric 2-tensors with compact support, which is essential to justify all the integrations by parts throughout the paper, as well as to justify applying the conjugation identity \eqref{eq; conjugation} to the tensor $k=e^{f/2}h_A$ in the proof of Theorem \ref{thm; linear stability}. This is discussed in detail in \cite[Section 6.1]{DeruelleStability}, and for a standard reference on these types of results, see \cite[Chapter 6]{KatoBook}.

\begin{prop}\label{prop; form domain}
Let $(M,g,\nabla f)$ be a non-compact, complete steady or expanding gradient K\"ahler-Ricci soliton with bounded curvature. Then, the quadratic form 
\[
Q_f(h)= \int_M \left( |\nabla h|^2 - 2\langle \Rm\ast h,h\rangle\right)d\mu_f,\] defined for every $h \in C_c^{\infty}(M,\Sym^2 T^{\ast}M),$ extends to a nonnegative, closed quadratic form in $L^2_f,$ with domain equal to $H^1_f.$ In this setting, $Q_f$ is the closed quadratic form associated to the self-adjoint operator $L_f.$
\end{prop}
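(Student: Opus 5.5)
Under bounded curvature, $Q_f$ is a bounded perturbation of the Dirichlet form of the weighted Laplacian. I will therefore extend the Dirichlet part by closure, treat the curvature term as a bounded form, and obtain nonnegativity from Theorem \ref{thm; linear stability} by density.

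First, set $K:=\sup_M|\Rm|<\infty$ and write $Q_f(h)=\mathcal{D}(h)-\mathcal{V}(h)$, where
\[
\mathcal{D}(h)=\int_M|\nabla h|^2\,d\mu_f,\qquad \mathcal{V}(h)=2\int_M\langle\Rm\ast h,h\rangle\, d\mu_f .
\]
Pointwise we have $|\langle \Rm\ast h,h\rangle|\le C_nK|h|^2$, so $\mathcal{V}$ is a bounded symmetric form on $L^2_f$. Next, $\mathcal{D}(h)+\|h\|^2_{L^2_f}=\|h\|^2_{H^1_f}$, and $H^1_f$ is by definition the completion of $C_c^\infty$ in this norm (equivalently the $L^2_f$ tensors with $\nabla h\in L^2_f$). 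Completeness of $g$ makes these two descriptions agree, by the usual cutoff argument: take $\phi_R$ with $|\nabla\phi_R|\le 1/R$ and mollify. Hence $\mathcal{D}$, defined on $C_c^\infty$, is closable, and its closure has domain exactly $H^1_f$. A closed form remains closed, with the same domain, after adding a bounded form; this is the KLMN perturbation theorem \cite[Chapter 6]{KatoBook} in its simplest case. Therefore $Q_f$ extends to a closed form on $H^1_f$, and the graph norms satisfy $Q_f(h)+(1+2C_nK)\|h\|^2_{L^2_f}\simeq\|h\|^2_{H^1_f}$.

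Nonnegativity follows from Theorem \ref{thm; linear stability}, which gives $Q_f\ge0$ on $C_c^\infty$. Since $C_c^\infty$ is dense in $H^1_f$ and $Q_f$ is continuous in the $H^1_f$ norm, $Q_f\ge0$ on all of $H^1_f$.

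Finally, Kato's representation theorem associates to this closed, semibounded, densely defined form a unique self-adjoint operator $\tilde L$ with form domain $H^1_f$. It remains to identify $\tilde L$ with $L_f$. For $h,k\in C_c^\infty$, integration by parts with respect to $d\mu_f$ gives $Q_f(h,k)=\int\langle L_f h,k\rangle d\mu_f$, using that $-\Delta_f$ is symmetric for $d\mu_f$ and that $\Rm\ast$ is self-adjoint pointwise. Hence $\tilde L$ extends $L_f|_{C_c^\infty}$; that is, $\tilde L$ is the Friedrichs extension of $L_f$. I expect this identification to be the main subtle point. To close it, I would show that $L_f$ is essentially self-adjoint on $C_c^\infty$. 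Because $\Rm\ast$ is bounded, this reduces to essential self-adjointness of $-\Delta_f$ on a complete weighted manifold, which is Chernoff's or Strichartz's argument applied to the weighted Laplacian; see also \cite[Section 6.1]{DeruelleStability}. With essential self-adjointness, the Friedrichs extension is the unique self-adjoint realisation of $L_f$, so $Q_f$ is the closed form associated to $L_f$.
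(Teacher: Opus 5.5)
Your proposal is correct and follows essentially the same route as the paper. You use density of $C_c^\infty$ in $H^1_f$ from completeness, treat the curvature term as a bounded form so that $\bigl(Q_f(h)+c\|h\|^2_{L^2_f}\bigr)^{1/2}$ is equivalent to the $H^1_f$ norm (hence closedness), and get nonnegativity by passing Theorem \ref{thm; linear stability} to the limit. Where you go further is the identification with $L_f$: the paper only asserts it (citing Deruelle and Kato), whereas you justify it via the Friedrichs extension and essential self-adjointness of $L_f$ on $C_c^\infty$ over a complete weighted manifold with bounded curvature, which is a valid and useful addition.
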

\begin{proof}
It is straightforward to see that when the soliton metric $g$ is complete, $C_c^{\infty}(M,\Sym^2 T^{\ast}M)$ is dense in $H^1_f$ with the norm 
\[
\| h\|^2_{H^1_f}:= \int_M (|h|^2 + |\nabla h|^2)d\mu_f,
\]
where $\nabla h$ is considered in the sense of distributions. Let $C_0>0$ be such that $|2\Rm\ast h|\leq C_0 |h|$ and consider a sequence $h_j \in C_c^{\infty}(M,\Sym^2 T^{\ast}M)$ with \( h_j \to h \) in \( H^1_f.\)
Then we have
\begin{equation*}
|Q_f(h_j)- Q_f(h)| \leq (1+C_0)\| h_j - h\|_{H^1_f}( \|h_j\|_{H^1_f} + \|h\|_{H^1_f}) \to 0
\end{equation*}
as $j\to \infty.$ In particular, 
\[Q_f(h)=\lim_{j\to \infty} Q_f(h_j) \geq 0.\]
A similar argument yields
\begin{equation*}
\| h\|_{H^1_f}^2\leq Q_f(h) + (1+C_0)\|h\|^2_{L^2_f} \leq (1+2C_0)\| h\|_{H^1_f}^2,
\end{equation*}
which shows that the norm $\left(Q_f(h) + (1+C_0)\|h\|^2_{L^2_f} \right)^{1/2}$ is equivalent to $\| h\|_{H^1_f}.$ In particular, $H_f^1$ is complete with this norm, which is enough to show closedness of $Q_f$ (see \cite[Chapter 6, Theorem 1.11]{KatoBook}). 
\end{proof}

\begin{proof}[Proof of Theorem \ref{thm; strict stability}]

First, we note that the proposition above justifies the infimum being taken over all non-zero symmetric 2-tensors in $H^1_f.$ Letting $u:= R+m,$ it follows from \cite[Theorem 3]{PRS} that $u\geq 0,$ and $u=0$ at one point implies that the metric is Einstein and $f$ is constant, which is excluded by each of the assumptions in $\mathscr{E}$. We can, therefore, assume $u> 0.$

Assuming \eqref{condition; f is proper}, we suppose, by way of contradiction, that $\inf_M u=0.$ The soliton identities from Lemma \ref{id-SGE} imply that $f+m = u + |\nabla f|^2 >0,$ and, since $f$ is proper by assumption, we have $\inf_M (f+m)>0.$ Again by the soliton identities, we get
\begin{equation*}
\Delta_f u = u - 2|\nabla^2 f|^2 \quad \text{ and } \Delta_f (f+m)= f+m.
\end{equation*}

For each small $\delta>0,$ we then consider $x_{\delta} \in M$ such that it minimises $u+ \delta(f+m),$ i.e.
\[
0 < u(x_{\delta}) + \delta(f(x_{\delta})+m) \leq u(x) + \delta(f(x) +m)
\]
for every $x\in M.$ Therefore, $u(x_{\delta}) + \delta(f(x_{\delta})+m)\to 0$ as $\delta \to 0$ and, at $x_{\delta},$ we have 
\begin{equation}\label{eq; hessian to 0}
2|\nabla^2 f|^2(x_{\delta}) \leq u(x_{\delta}) + \delta( f(x_{\delta}) +m) \to 0
\end{equation}
and $\nabla R(x_{\delta})= \nabla u(x_{\delta})=-\delta \nabla f(x_{\delta}).$ Since $\nabla R=-2\Ric(\nabla f) =  -2\nabla^2f(\nabla f, \cdot)+\nabla f,$ at $x_{\delta}$ we obtain
\begin{equation*}
\nabla^2 f(\nabla f,\cdot)= -\frac 12 \nabla R +\frac 12 \nabla f = \frac{1+\delta}{2}\nabla f.
\end{equation*}
If $|\nabla f|(x_{\delta})\neq 0,$ the equality above yields 
\begin{equation*}
|\nabla^2 f| \geq \frac{1+\delta}{2},
\end{equation*}
which contradicts \eqref{eq; hessian to 0}. Therefore, $|\nabla f|(x_{\delta})=0$ for every sufficiently small $\delta>0.$ This, however, implies that
\[
f(x_{\delta}) +m = u(x_{\delta}) \to 0,\] which contradicts the fact that $\inf (f+m)>0$ and proves our result.\\

Assumption \eqref{condition; R to 0} yields the conclusion after a straightforward application of the maximum principle to $u.$ Finally, \eqref{condition; R nonneg} also directly implies the theorem.

\end{proof}

\bibliographystyle{alphaurl}
\bibliography{stability}

\end{document}